\DeclareMathOperator{\area}{area}
\DeclareMathOperator{\cov}{cov}
\DeclareMathOperator{\im}{Im}
\newcommand{\R}{\mathbb{R}}
\newcommand{\C}{\mathbb{C}}
\newcommand{\N}{\mathbb{N}}
\newcommand{\E}{\mathbb{E}}
\newcommand{\Z}{\mathbb{Z}}
\newcommand{\pp}{\mathbb{P}}
\newcommand{\kC}{\mathcal{C}}
\newcommand{\kE}{\mathcal{E}}
\newcommand{\kG}{\mathcal{G}}
\newcommand{\kS}{\mathcal{S}}
\newcommand{\kT}{\mathcal{T}}
\newcommand{\kU}{\mathcal{U}}
\newcommand{\excu}{\mathcal{E}}
\newcommand{\smex}{\mathcal{E}_{\text{sm}}}
\newcommand{\mdex}{\mathcal{E}_{\text{md}}}
\newcommand{\lgex}{\mathcal{E}_{\text{lg}}}
\newcommand{\anglechange}[3]{w^{#1}_{#3}(#2)}  
\renewcommand{\angle}[3]{\psi^{#1}_{#3}(#2)}
\renewcommand{\log}{\ln}
\newtheorem {theo} {Theorem} [section]
\newtheorem {lem}[theo] {Lemma}
\newtheorem {prop}[theo] {Proposition}
\newtheorem {thm}[theo] {Theorem}
\newtheorem {cor}[theo] {Corollary}
\title[Windings of planar random walks and averaged Dehn function]
      {Windings of planar random walks and averaged Dehn function}
\author{Bruno Schapira}
\address{D\'epartement de Math\'ematiques, B\^at. 425, Universit\'e Paris-Sud 11, F-91405 Orsay, cedex, France. }
\email{bruno.schapira@math.u-psud.fr}
\author{Robert Young}
\address{Institut des Hautes \'Etudes Scientifiques. Le Bois Marie, 35 route de Chartres, F-91440 Bures-sur-Yvette, France}
\email{rjyoung@ihes.fr}
\begin{document}

\begin{abstract}
We prove sharp estimates on the expected number of windings of a simple random walk on the square or triangular lattice. This gives new lower bounds on the averaged Dehn function, which measures the expected area needed to fill a random curve with a disc.
\end{abstract}
\keywords{simple random walk, winding number, averaged Dehn function}

\subjclass[2000]{52C45; 60D05}

\maketitle

\section{Introduction}

The winding numbers of random curves have received much study, going
back to L\'{e}vy \cite{Levy} and Spitzer \cite{Sp}.  L\'{e}vy, in
particular, studied the L\'{e}vy area of a planar Brownian motion,
which is based on adding, with sign, the winding number of the curve
around different regions of the plane.  In this paper, we will study
the total winding number of a random closed curve, that is, the
integral of the magnitude of the winding number over the plane.  The
total winding number of a curve is always non-negative, unlike the
L\'{e}vy area, and it is connected to problems of filling curves by
discs or cycles.

Let $\theta_t(z)$ be the angular part of a Brownian motion with
respect to $z$.  Spitzer showed that $(\ln t)^{-1}\theta_t(z)$
converges to the Cauchy distribution, so the winding angle of a
Brownian motion can be quite large.  Further results on the
winding number of Brownian motion suggest that the total winding
number of a loop based on Brownian motion is infinite.  Werner
\cite{W} showed that if $\theta_t(z)$ is the angular part of a
Brownian motion with respect to $z$, then
\begin{equation}
\label{limitindexBM}
k^2\area\{z\in \C\mid \theta_t(z)-\theta_0(z)\in [2\pi k, 2\pi(k+1))\} \to t/2\pi
\end{equation}
in $L^2$ as $k\to +\infty$. Thus if $\gamma$ is the loop formed by
connecting the ends of a Brownian motion by a straight line and
$i_\gamma(x)$ is the winding number of $\gamma$ around $x$, then
$$\E\left[\int_{\R^2} |i_\gamma(x)|\;dx\right]=\infty.$$
Similarly, Yor \cite{Y} explicitly
computed the law of the index of a
point $z\in \C$ with respect to a Brownian loop.  Using this result,
one can show that there is equality in \eqref{limitindexBM} for all
$k\neq 0$, so the expectation above is also infinite if $\gamma$ is a
Brownian loop.

This infinite area, however, is due to the presence of small regions
with arbitrarily large winding number, which are not present if
$\gamma$ is the curve formed by connecting the ends of a random walk
by a straight line.  In fact, since $\gamma$ has finite length, its
total winding number is finite.  Let $i_n(z)$ be the random variable
corresponding to the number of times that a random walk of $n$ steps
winds around $z$.  Windings of
random walks have been studied less than windings of Brownian curves,
but one result is an analogue of Spitzer's theorem for the random
walk, due to B\'elisle \cite{Be}, who proved that for any fixed $z\in
\C$, the distribution of $i_n(z)/\ln n$ converges to the hyperbolic
secant distribution as $n\to +\infty$.

In this paper, we will prove 
\begin{theo}
\label{thm:maintheo}
Let $(S_n, n\ge 0) $ be the simple random walk on
the unit square lattice in the complex plane.  Let $\kS_n$ be the loop joining the
points $S_0,S_1,\dots,S_n,S_0$
in order by straight lines and for $z\in \C\setminus \kS_n$, let
$i_n(z)$ be the index of $z$ with respect to $\kS_n$.  Then 
\begin{equation*}
\E\left[\int_\C |i_n(z)|\ dz\right] \sim \frac{1}{2 \pi} n\ln \ln n,
\end{equation*}
when $n\to +\infty$.
\end{theo}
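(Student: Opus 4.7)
The plan is to decompose
$$\int_\C |i_n(z)|\, dz \;=\; \sum_{k\ne 0} |k|\, A_k(n), \qquad A_k(n) := \area\{z\in\C : i_n(z) = k\},$$
and estimate $\E[A_k(n)]$ in three regimes: $|k|=O(1)$, $1 \ll |k| \ll \log n$, and $|k| \gtrsim \log n$. The dominant contribution to the main term will come from the intermediate range.

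The heart of the proof is a discrete, loop-version analog of Werner's identity \eqref{limitindexBM} for the walk: for constants $K_0$ large and $c>0$ well-chosen, and for $K_0 \le |k| \le c\log n$,
$$\E[A_k(n)] \;\sim\; \frac{n}{4\pi k^2},$$
with coefficient $1/(4\pi)$ being the one predicted for a Brownian loop by Yor's formula \cite{Y}. Heuristically this arises from Donsker scaling: the rescaled walk $(S_{\lfloor n\cdot \rfloor}/\sqrt n)$ is approximately a unit-time Brownian loop, and area scales by the factor $n$. I would make this rigorous via a strong coupling (Skorokhod-type or KMT) between $(S_n)$ and a planar Brownian motion, and argue that the windings of the two processes around a typical $z$ differ by at most $O(1)$, so that the level sets $\{i_n = k\}$ can be compared to their Brownian counterparts. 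A separate discrete argument---using potential-theoretic and harmonic-measure estimates for the simple random walk---will be needed at small spatial scales, where coupling alone is insufficient.

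For the cutoff, I would invoke B\'elisle's theorem \cite{Be}: since $i_n(z)/\log n$ converges to a distribution with exponential tails, one has pointwise $\pp(i_n(z)=k) \lesssim (\log n)^{-1} e^{-c|k|/\log n}$, and integrating over the $O(n)$-diameter range of the walk gives $\sum_{|k|\ge C\log n} |k|\, \E[A_k(n)] = o(n\log\log n)$. The range $|k|=O(1)$ contributes at most $O(n)$, since the filled curve has area $O(n)$, which is likewise negligible compared to $n\log\log n$. Summing the three ranges and using the sign-symmetry $k \mapsto -k$,
$$\E\!\left[\int_\C |i_n(z)|\, dz\right] \;\sim\; 2 \sum_{K_0 \le k \le c\log n} \frac{n}{4\pi k} \;\sim\; \frac{n}{2\pi}\log\log n.$$
The main obstacle will be the Werner-type asymptotic uniform in $k$ up to $|k|\asymp \log n$: since windings of this size live on regions of area $\sim 1/(\log n)^2$, far below the Donsker scale $1/\sqrt n$, a bare coupling with Brownian motion cannot suffice, and must be combined with a direct lattice analysis of the small ``petals'' of the walk that carry the bulk of the large-winding area.
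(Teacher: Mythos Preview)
Your outline shares the paper's skeleton---strong approximation to Brownian motion, a Werner-type estimate in the bulk range, and a separate argument for large $|k|$---but is organized around the level-set areas $A_k(n)$, whereas the paper works pointwise in $z$: it proves $\E[|j_n(z)|]/\ln\ln n \to \pi^{-1}\int_0^1 p_s(0,z)\,ds$ together with an $L^1$ dominant, and concludes by dominated convergence. The two viewpoints are Fubini-equivalent, but the pointwise route sidesteps the need for the uniform asymptotic $\E[A_k(n)]\sim n/(4\pi k^2)$ up to $k\asymp\log n$, which the paper never establishes and does not need.

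There is a genuine gap in your tail argument. Even granting the pointwise bound $\pp(i_n(z)=k)\lesssim (\log n)^{-1}e^{-c|k|/\log n}$ (which does not follow from B\'elisle's convergence-in-distribution statement alone), integrating over a region of area $O(n)$ gives $\E[A_k(n)]\lesssim n(\log n)^{-1}e^{-c|k|/\log n}$, and then
\[
\sum_{|k|\ge C\log n} |k|\,\E[A_k(n)] \;\lesssim\; \frac{n}{\log n}\sum_{k\ge C\log n} k\, e^{-ck/\log n}\;\asymp\; n\log n,
\]
not $o(n\log\log n)$. The paper controls this range by a genuinely lattice-based decomposition of the walk into small, medium, and large excursions between two rays through $z$ (Proposition~\ref{prop:high index}), obtaining $\pp[|j_n(z)|\ge k]\le \phi(z)/k$ for $k\ge\ln n$ and the sharper $\pp[|j_n(z)|\ge k]\le (\ln n)k^{-2}\phi(z)$ for $k\ge(\ln n)^{1+\epsilon}$; summing these over $k$ yields a tail $\le\epsilon\,\phi(z)\ln\ln n$ with $\epsilon$ arbitrary. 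So the ``direct lattice analysis'' you anticipate is indeed required, but for the upper bound on the tail, not to push the Werner asymptotic itself to $k\asymp\log n$.

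Two smaller points. First, the rescaled walk converges to Brownian motion closed by a chord, not to a Brownian loop; the constant $1/(4\pi)$ is nonetheless correct (Werner and Yor give the same $1/(2\pi k^2)$ level-set area, and the extra $1/2$ is the covariance $\kappa=1/2$ of the square-lattice walk). Second, your scale comparison is inverted: for $|k|\sim\log n$ the rescaled level set has area $\sim(\log n)^{-2}$, hence linear size $\sim(\log n)^{-1}$, which is much \emph{larger} than the mesh $n^{-1/2}$. The obstruction in this range is not sub-lattice geometry but that such points lie inside the Wiener sausage of width $\epsilon_n\asymp(\log n)/\sqrt{n}$, where the coupling gives no direct comparison of winding numbers.
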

Note that the integral in the theorem is well defined since $\kS_n$ has Lebesgue measure $0$. 

The basic idea of the proof is to use strong approximation to relate
windings of the random walk to windings of Brownian motion.  The main
effect of replacing a Brownian curve with a random walk is to
eliminate points with very high winding numbers, and the replacement
does not affect points which are far from the Brownian motion.  We
thus find a lower bound on the expected winding number by considering
points which are far from the random walk (Prop.~\ref{prop:low
  index}).  We find an upper bound by bounding the number of points
with high winding numbers (Prop.~\ref{prop:high index}).  Windings
around a point can be broken into classes depending on their distance
from the point, and we bound the number of windings in each class
separately.  We will give some notation and preliminaries in Sec.~\ref{sec:prelims}, and
prove Thm.~\ref{thm:maintheo} in Sec.~\ref{sec:proof}.  

In Section~\ref{sec:apps}, we describe an application to geometric
group theory.  The isoperimetric problem is a classical problem in
geometry which asks for the largest area that can be bounded by a loop of
a given length.  This question can be asked for a variety of spaces,
and is particularly important in geometric group theory, where the
growth rate of this area as a function of the length of the loop is
known as the Dehn function and carries information on the geometry of
a group (see \cite{Bri} for a survey).  Gromov \cite[5.A$'_6$]{Gr}
proposed studying the distribution of areas of random curves as an
alternative to studying the supremal area of a curve, and the total
filling area of a curve in $\R^2$ is bounded below by its total
winding number.



Finally, in Section \ref{secext}, we give some extensions of
Thm.~\ref{thm:maintheo}, including a version that holds for the
triangular lattice and a lower bound for Brownian bridges in the plane.

\vspace{0.2cm} \noindent \textit{Acknowledgments: We would like to thank Anna Erschler, Jean-Fran\c{c}ois Le Gall, David Mason, Pierre Pansu, Christophe Pittet, Oded Schramm, and Wendelin Werner for useful discussions and suggestions.}

\section{Preliminaries and notation}\label{sec:prelims}

We start by laying out some of the background and notation for the
rest of the paper.  We will recall some standard notation and describe
results of Zaitsev on approximating random walks by Brownian motion
and of Werner on winding numbers of Brownian motion.

We first describe some notation for the growth of functions.  Recall
that $f(x)=O(g(x))$ if and only if $\limsup_{x\to \infty}
|f(x)/g(x)|<\infty$.  We define a class of quickly-decaying functions:
\begin{equation*}
\kG=\{g:\R^+ \to \R^+ \mid g(k)=O(k^{-c}) \quad \forall c>0\}.
\end{equation*}

We next describe some notation for planar random walks and winding
numbers.  Throughout this paper, we will identify $\R^2$ with $\C$.
Let $(S_i,i\ge 0)$ be a random walk on $\R^2$ with i.i.d.\ bounded
increments, $\E[S_{i}-S_{i-1}]=0$, and $\cov (S_i-S_{i-1})=\kappa I$. Except express mention of the contrary, we will assume that $S$ is the simple random walk on the unit square lattice, for which $\kappa=1/2$.  For
any $n$, consider the rescaled process $(X_t,0\le t \le 1)$ defined by:
$$X_t:=\frac{S_{\lfloor nt\rfloor}+(t-\frac{\lfloor nt\rfloor}{n})(S_{\lfloor nt\rfloor+1}-S_{\lfloor nt\rfloor})}{\sqrt{\kappa n}},$$
for all $t\ge 0$ (the dependence on $n$ in the notation will often be
implicit).  Here, $\lfloor x\rfloor$ represents the largest integer
which does not exceed $x$.  Denote by $\kC_n$ the loop made of the curve
$(X_t,0\le t \le 1)$ and the segment joining $X_0$ and $X_1$.  This
curve connects the points of the random walk in order and joins the
endpoints. Note in comparison with the notation of Theorem
\ref{thm:maintheo} that $\kC_n=\kS_n/\sqrt{n/2}$.  We will primarily
work with $X_t$ and $\kC_n$ rather than $S_i$ and $\kS_n$.  

If $Y_t$ is a continuous function of $t\in [0,1]$ and $z$ is not in its image,
let $\angle{z}{t}{Y}$ be the unique continuous lift of $\im
(\ln(Y_t-z))$ such that $\angle{z}{0}{Y}\in [0,2\pi)$.  If
$T=\bigcup_i I_i$ is a finite disjoint union of intervals $I_i$ with
endpoints $x_i$ and $y_i$, let $\anglechange{z}{T}{Y}=\sum_i
[\angle{z}{y_i}{Y}-\angle{z}{x_i}{Y}]$.  If $z$ is in the image of
$Y$, set $\anglechange{z}{T}{Y}=0$.  For $z$ not in the image of $\kC_n$, let
\begin{eqnarray}
\label{jnz}
j_n(z):=\left[\frac{\anglechange{z}{[0,1]}{X}}{2\pi}\right],
\end{eqnarray}
where $[x]$ represents the closest integer to $x$. Note that if $\anglechange{z}{[0,1]}{X}$ is an odd
multiple of $\pi$, then $z$ is on the line connecting $X_0$ and $X_1$. So if $z\notin \kC_n$, then $j_n(z)$ is well-defined, and this is the index of $z$ with
respect to $\kC_n$. Actually it will be also convenient to define $j_n(z)$ when $z$ is on the line connecting $X_0$ and $X_1$. In this case we set by convention
\begin{eqnarray}
\label{jnz2}
j_n(z):= \frac{\anglechange{z}{[0,1]}{X}}{2\pi}-\frac 1 2.
\end{eqnarray}
Observe now that 
$$i_n(z\sqrt{n/2})=j_n(z) \quad \hbox{for all }z\in \C\setminus
\kC_n.$$

One of our main tools is the fact that a
random walk can be approximated by a Brownian motion. A
strong approximation theorem due to Zaitsev \cite{Zaitsev}, which
improves bounds of Einmahl \cite{Ein} and generalizes results of 
Koml\'os, Major and Tusn\'ady \cite{KMT}, implies the following:
\begin{thm}[\cite{Zaitsev}]\label{thm:zaitsev}
  If $(X_t, 0\le t \le 1)$ is defined as above, then there is a constant $c>0$ such
  that for any $n>1$, there exists a coupling of $(X_t,0\le t \le 1)$
  with a Brownian motion $(\beta_t,0\le t \le 1)$ such that
  $$\pp \biggl[\sup_{k\le n,k\in \N}|X_{k/n} - \beta_{k/n}| \ge c\frac{\ln n}{\sqrt{n}}\biggr] \le \frac{1}{n^4}.$$
\end{thm}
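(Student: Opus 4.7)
The plan is to follow the dyadic Koml\'os--Major--Tusn\'ady coupling scheme in its multidimensional refinement due to Einmahl and Zaitsev. First I would reduce to a discrete-time coupling at the lattice times: since $X_{k/n}=S_k/\sqrt{\kappa n}$ for $k\in\N$, it suffices to build, on a common probability space, a Gaussian random walk $(G_k)_{0\le k\le n}$ whose increments are i.i.d.\ centered Gaussian with covariance $\kappa I$ such that
\[
\pp\bigl[\sup_{k\le n}|S_k-G_k|\ge c_0\ln n\bigr]\le n^{-4},
\]
and then set $\beta_{k/n}:=G_k/\sqrt{\kappa n}$, filling in the values of $\beta$ on each interval $(k/n,(k+1)/n)$ by an independent Brownian bridge; this does not affect the supremum that appears in the statement.

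Assuming $n=2^N$ for convenience, I would build the coupling top-down by recursion on dyadic levels. At the root, a bivariate local central limit theorem for $S_n/\sqrt{n}$ yields a coupling of $S_n$ with $G_n$ of error $O(1)$ with super-polynomial failure probability. Then, for each dyadic interval $[a,b]$ of length $2^\ell=b-a$, assume inductively that $(S_a,S_b)$ has been coupled to $(G_a,G_b)$, and construct a coupling of the midpoint $S_{(a+b)/2}$ with $G_{(a+b)/2}$ conditional on these four values. The conditional law of $S_{(a+b)/2}-\tfrac12(S_a+S_b)$ is the imbalance of a bridge of $2^\ell$ bounded i.i.d.\ steps, while the analogous Brownian quantity is an independent centered Gaussian with covariance $(\kappa/4)\cdot 2^\ell I$. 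A sharp local CLT for conditioned sums should give a coupling of these two conditional laws whose pointwise difference is bounded by a constant except on an event of probability decaying faster than any power of $2^\ell$.

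The main obstacle is precisely this conditional midpoint coupling in dimension two. In one dimension, the classical Tusn\'ady lemma provides an explicit quantile coupling of a binomial with a suitable Gaussian, controlling the pointwise difference by $O(1)$ with Gaussian-tailed error, and the whole dyadic scheme is driven by this single estimate. In dimension two there is no canonical quantile map, and this is the technical heart of the Einmahl--Zaitsev theorems: the coupling of the conditional midpoint laws is constructed layer by layer via Rosenblatt-type conditional transforms applied to suitably smoothed approximations of the two densities, and Zaitsev's refinement produces polynomial failure probabilities with explicit constants by exploiting the boundedness of the increments (trivially satisfied here by the steps $\pm e_1,\pm e_2$ of the square lattice walk).

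Finally, I would aggregate the per-level errors. For any $k\le n$, the value $S_k$ is reached from the root of the dyadic tree by $N=\log_2 n$ successive midpoint refinements, so $|S_k-G_k|$ is bounded by the sum of the midpoint coupling errors along this branch. Each such error is $O(1)$ with failure probability smaller than any fixed polynomial in $n$, so a union bound over the $n$ leaves and the $N$ levels gives $\sup_{k\le n}|S_k-G_k|\le c_0\ln n$ off an event of probability at most $n^{-4}$. Dividing by $\sqrt{\kappa n}=\sqrt{n/2}$ yields the bound stated in the theorem.
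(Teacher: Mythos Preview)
The paper does not prove this theorem at all: it is quoted as a known result from \cite{Zaitsev} (building on \cite{Ein} and \cite{KMT}) and used as a black box. There is therefore no ``paper's own proof'' to compare your attempt against.

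As for your sketch itself, it is a faithful high-level summary of the dyadic KMT scheme and correctly locates the difficulty: the multidimensional analogue of Tusn\'ady's quantile lemma for the conditional midpoint law is the entire content of Zaitsev's paper, and you do not actually carry it out. Everything you wrote about the dyadic recursion, the $O(\log n)$ branch length, and the final union bound is standard and correct once that lemma is available. But the phrase ``a sharp local CLT for conditioned sums should give a coupling \dots'' hides all of the work; the passage from a local CLT to a pointwise $O(1)$ coupling with super-polynomial failure probability in dimension $\ge 2$ is highly nontrivial and is precisely what Zaitsev proves. So your proposal is not wrong as an outline, but it is a plan to reproduce a long external paper rather than a proof, and in the context of the present article the intended ``proof'' is simply a citation.
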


Standard properties of the Brownian motion imply that there is a
constant $c'>0$ such that
$$\pp \biggl[\sup_{t\le 1} \sup_{h\le 1/n} |\beta_t-\beta_{t+h}|\ge \frac{\ln n}{\sqrt{n}}\biggr] \le \frac{c'}{n^4},$$
for all $n > 1$, so there are constants $c_0>0$ and $c''>0$ such that 
\begin{equation*}
\pp \left[\sup_{t\le 1} |X_t - \beta_t| \ge c_0\frac{\ln n}{\sqrt{n}}\right] \le \frac{c''}{n^4},
\end{equation*}
for all $n\ge 1$.  Let 
$$\epsilon_n:= c_0\frac{\ln n}{\sqrt{n}}$$
and let $\kU$ be the event
\begin{equation*}
\kU=\kU(n):=\left\{\sup_{t\le 1} |X_t - \beta_t| \le \epsilon_n\right\}.
\end{equation*}

\vspace{0.2cm}
\noindent Let $(\beta_t,t\ge 0)$ be a complex Brownian motion starting from $0$.  As with the random walk, we can connect the endpoints of the
Brownian motion by a line segment to form a closed curve
$\tilde{\kC}$.  If $z\notin \tilde{\kC}$, let
$$\tilde{j}(z):=\left[\frac{\anglechange{z}{[0,1]}{\beta}}{2\pi}\right],$$
be its index with respect to $\tilde{\kC}$.

We will need to consider the number of times $\beta$ winds
around a point, especially when this occurs outside a ball of
radius $\epsilon$.  
Let $B(z,r)$ denote the closed disc with center $z\in \C$ and radius $r$.  For
$r\ge 0$, let
\begin{align}
\label{Trtr}
T_r(z):=\inf \{s\ge 0 \mid \beta_s\not \in B(z,r)\}\\
\nonumber t_r(z):=\inf \{s\ge 0 \mid \beta_s\in B(z,r)\}.
\end{align}
For any $z\neq 0$, we have the following
skew-product representation (see for instance \cite{LG} or \cite{RY}):
$$\beta_t-z=\exp(\rho^z_{A^z_t}+i\theta^z_{A^z_t}),$$
where $((\rho^z_t,\theta^z_t),t\ge 0)$ is a two-dimensional Brownian motion, and 
$$A^z_t=\int_0^t\frac{1}{|\beta_s-z|^2}\ ds \quad \hbox{for all } t\ge 0.$$
Note the intuition behind this representation; when $\beta_t$ is far from $z$, then $A^z$ increases slowly, so that $\arg (\beta_t-z)$ also varies slowly. 
For $\epsilon >0$ and $t\ge 0$, let 
\begin{equation}\label{eq:zepsilon}
  Z_\epsilon(t) := \left(\int_0^t\frac{1_{\{|\beta_s-z|\ge \epsilon\}}}{|\beta_s-z|^2}\ ds\right)^{1/2},
\end{equation}
and let $Z_\epsilon := Z_\epsilon(1).$
This $Z_\epsilon$ controls the amount of winding around $z$ which
occurs while the Brownian motion is outside $B(z,\epsilon)$.  
The next lemma is essentially taken from \cite{Sp2} and  Lemma
$2$ and Corollary $3$ (ii) of \cite{W}. It
shows, among other things, that $Z_\epsilon$ is not likely to be much larger than $|\ln \epsilon|$. 
\begin{lem}\label{lem:prelim}
\ \\
\begin{enumerate} 
\item[(i)] \label{lem:wernerasympbnd} There is a $g\in \kG$, such that for all $\epsilon \in (0,1/2)$ and all $k\ge 1$, 
$$\pp[Z_\epsilon \ge k]\le g(k/|\ln \epsilon|).$$
\item[(ii)] \label{lem:nearness} There exists a function $\phi \in L^1$, such that for all $\epsilon \in (0,1/2)$ and all $z\neq 0$,

$$\pp[t_\epsilon(z) \le 1] \le \frac{\phi(z)}{|\ln \epsilon|}.$$
\item[(iii)] There exists a function $\phi \in L^1$, such that for all $\epsilon \in (0,1/2)$, all $z\neq 0$ and all $k\ge 1$, 
$$\pp[Z_\epsilon \ge k] \le \frac{\phi(z)}{k}.$$ \label{lem:wernerunivbd}
\end{enumerate}
\end{lem}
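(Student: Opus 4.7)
I would prove the three parts in the order (ii), (i), (iii). For (ii), split on $|z|$. When $|z|$ is outside a fixed large disc, the bound $\pp[t_\epsilon(z)\le 1]\le \pp[\sup_{s\le 1}|\beta_s|\ge |z|-\epsilon]$ decays like $e^{-c|z|^2}$ by Gaussian tail estimates and so absorbs the $|\ln\epsilon|^{-1}$ factor into a constant while providing the $L^1$ tail of $\phi$. When $|z|$ lies in a compact set, apply standard 2D potential theory: conditional on $\beta$ reaching a fixed-radius neighborhood of $z$ before time $1$, the subsequent probability of entering $B(z,\epsilon)$ before exiting a slightly larger neighborhood is of order $|\ln\epsilon|^{-1}$ by the explicit harmonic measure formula on an annulus (equivalently, the logarithmic Green's function).

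For (i), use the skew-product representation $\beta_t-z=\exp(\rho^z_{A^z_t}+i\theta^z_{A^z_t})$ recalled in the text. Then $Z_\epsilon^2$ is exactly the Lebesgue measure of $\{u\in [0,A^z_1]:\rho^z_u\ge \ln\epsilon\}$, that is, the occupation time of the one-dimensional Brownian motion $\rho^z$ above the level $\ln\epsilon$ on the random interval $[0,A^z_1]$. This reduces the problem to standard Brownian occupation-time estimates; Werner's Lemma~2 in \cite{W} provides the required superpolynomial decay via Ray--Knight identities and the superpolynomial tails of squared Bessel processes.

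For (iii), combine (i) and (ii) through the strong Markov property at $t_1(z)$, the first entrance of $\beta$ into $B(z,1)$. Note first that $Z_\epsilon^2\le 1$ whenever $\beta$ avoids $B(z,1)$ on $[0,1]$, since the integrand defining $Z_\epsilon^2$ is then bounded by $1$; hence $\{Z_\epsilon\ge 2\}\subseteq \{t_1(z)\le 1\}$. The hitting probability $\pp[t_1(z)\le 1]$ is an $L^1$ function of $z$ (by the argument for (ii) with $\epsilon$ replaced by a bounded constant and the Gaussian tail for large $|z|$), which yields the factor $\phi(z)$. The $1/k$ decay then follows by applying (i) after $t_1(z)$ via strong Markov, combined with the natural $1/k$ tail on scale $|\ln\epsilon|$ coming from the Cauchy-like behavior of the Brownian winding angle, while (i) itself governs the regime $k\gg|\ln\epsilon|$.

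The principal obstacle is part (i): while the skew-product reduction is immediate, obtaining superpolynomial (rather than merely polynomial) decay is genuinely subtle and requires the excursion-theoretic analysis of \cite{W}; elementary moment estimates on $Z_\epsilon^2$ give only polynomial bounds, since $\E[Z_\epsilon^2]$ grows like $|\ln\epsilon|^2$ and so Chebyshev is insufficient.
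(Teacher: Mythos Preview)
Your outline for (ii) is the standard one and matches what the paper cites (Spitzer's Wiener-sausage estimate). For (i), however, you overstate the difficulty and misidentify the mechanism. The paper's argument needs neither Ray--Knight nor excursion theory: after your (correct) skew-product reduction, one localizes by setting $M=e^{\sqrt{k}}$ and observes that $Z_\epsilon(T_M(z))^2$ is precisely the exit time from the interval $[0,\ln(M/\epsilon)]$ by a one-dimensional Brownian motion reflected at $0$ (the reflection arises from excising the time $\rho^z$ spends below $\ln\epsilon$). The standard spectral bound $\pp[\sigma_L>t]\le c\exp(-c't/L^2)$ for such exit times, after the scaling $L=\ln(M/\epsilon)\le \sqrt{k}+|\ln\epsilon|$ and $t=k^2$, already yields the superpolynomial decay in $k/|\ln\epsilon|$; the event $\{T_M(z)\le 1\}$ and the regime $|z|>M/2$ are handled by the maximal inequality. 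So the ``principal obstacle'' you flag is not one: a scaled exit-time bound does the job, and your claim that only Chebyshev-type polynomial bounds are available elementarily is incorrect.

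For (iii), the paper simply cites Werner's Corollary~3(ii), whereas you try to derive it from (i), (ii), and the Cauchy-like tail of the winding angle. Your route can be made to work, but not quite as written. The appeal to (i) is essentially idle: in the regime $k\lesssim|\ln\epsilon|$ the bound $g(k/|\ln\epsilon|)$ gives nothing, and in the complementary regime it gives $|\ln\epsilon|^2/k^2$, which is not $\le C/k$ uniformly until $k\gtrsim|\ln\epsilon|^2$. The real content of your sketch is the inequality $Z_\epsilon\le Z_0=\sqrt{A^z_1}$ together with the fact that the winding angle equals $\gamma\sqrt{A^z_1}$ in law for a standard normal $\gamma$ independent of $A^z_1$; hence a $\phi(z)/k$ tail for the winding angle (Werner's Lemma~5) transfers to $\sqrt{A^z_1}$ via $\pp[\sqrt{A^z_1}\ge k]\le \pp[|\gamma|\sqrt{A^z_1}\ge k]/\pp[|\gamma|\ge 1]$, and thus to $Z_\epsilon$. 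That is a valid argument, but note that it trades one Werner citation for another rather than deriving (iii) from (i) and (ii) alone, and your strong-Markov localization at $t_1(z)$ is then unnecessary since the $L^1$ factor $\phi(z)$ already comes from the winding-angle estimate.
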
 
\begin{proof} We start with part (i). The proof is essentially contained in the proof of Lemma $2$ in \cite{W}.  Let $M=e^{\sqrt{k}}$. Consider first the case where $|z|\le M/2$. 
The skew-product decomposition shows that $Z_\epsilon(T_M(z))^2$ has the distribution of the exit time of $[0,\ln (M/\epsilon)]$ by a reflected Brownian motion 
starting from $\ln |z|-\ln \epsilon$ if $|z| > \epsilon$, or from $0$ if $|z|\le \epsilon$. 
The Markov property implies that this exit time is dominated by the exit time $\sigma_{\ln (M/\epsilon)}$ of $[0,\ln (M/\epsilon)]$ by a reflected Brownian motion starting from $0$.  
Thus 
\begin{align*}
\pp[Z_\epsilon(T_M(z)) \ge k] &\le  \pp [\sigma_{\ln (M/\epsilon)} \ge k^2]\\
&\le  \pp\left[\sigma_1 \ge \frac{k^2}{(\sqrt{k} + |\ln \epsilon|)^2}\right]\\
&\le  c\exp\left( -c'\frac{k^2}{(\sqrt{k} + |\ln \epsilon|)^2}\right),
\end{align*}
where $c$ and $c'$ are positive constants.  For the last inequality
see for instance \cite{PS} Proposition 8.4.  If $k/|\ln \epsilon|>1$,
\begin{align*}
\pp[Z_\epsilon(T_M(z)) \ge k] & \le c\exp\left( -c'\frac{k/|\ln \epsilon|}{(|\ln \epsilon|^{-1/2} + \sqrt{|\ln \epsilon|/k})^2}\right) \\
& \le c\exp\left( -c' \frac{k}{10|\ln \epsilon|}\right) \le g(k/|\ln \epsilon|),
\end{align*}
for some $g\in \kG$.

\noindent On the other hand, since $|z|\le M/2$, by the maximal inequality, there is a $g\in \kG$ such that
$$\pp[T_M(z)\le 1] \le \pp[T_{M/2}(0)\le 1]\le g(k).$$
The case $|z|\le M/2$ now follows from the inequality
$$\pp[Z_\epsilon \ge k]\le \pp[T_M(z) \le 1]+ \pp[Z_\epsilon(T_M(z)) \ge k].$$

\noindent Next assume that $|z|>M/2$. Then  
\begin{align*}
\pp[Z_\epsilon \ge k]& \le  \pp\left[\inf_{s\le 1} |\beta_s-z|\le k^{-1}\right]\\
                                                &\le  \pp[T_{M/2}(0)\le 1] \le g'(k),
\end{align*} 
for some function $g'\in \kG$, which concludes the proof of (i).  

\vspace{0.2cm}
\noindent Part (ii) is essentially due to Spitzer \cite{Sp2} (see also \cite{LG} for a precise statement). Part (iii) is a special case
of Corollary $3$ (ii) in \cite{W}.
\end{proof}
\noindent Let $\epsilon_n$ be as in the remarks after
Thm.~\ref{thm:zaitsev}.  The consequence of (ii) in the previous lemma for the random walk is the 
\begin{cor}\label{cor:RWnearness} For any $c>0$, there exists a function $\phi \in
  L^1$, such that for all $z\neq 0$ and sufficiently large $n$,
$$\pp[\inf \{t\ge 0 \mid X_t\in B(z,c \epsilon_n)\} \le  1] \le \frac{\phi(z)}{\ln n}.$$
\end{cor}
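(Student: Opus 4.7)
The plan is to push Lemma~\ref{lem:prelim}(ii) from the Brownian motion to the random walk via the Zaitsev coupling of Theorem~\ref{thm:zaitsev}. Write $\tau_n := \inf\{t\ge 0\mid X_t\in B(z,c\epsilon_n)\}$ and split $\{\tau_n\le 1\}$ according to the good coupling event $\kU$ and its complement.

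On $\kU$ we have $\sup_{t\le 1}|X_t-\beta_t|\le \epsilon_n$, so any visit of $X$ to $B(z,c\epsilon_n)$ forces $\beta$ to visit $B(z,(c+1)\epsilon_n)$; that is, $\{\tau_n\le 1\}\cap \kU\subseteq \{t_{(c+1)\epsilon_n}(z)\le 1\}$. For $n$ large, $(c+1)\epsilon_n\in (0,1/2)$, so Lemma~\ref{lem:prelim}(ii) applied with $\epsilon=(c+1)\epsilon_n$ gives $\pp[t_{(c+1)\epsilon_n}(z)\le 1]\le \phi_0(z)/|\ln((c+1)\epsilon_n)|$ for some $\phi_0\in L^1$. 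Since $\epsilon_n=c_0\ln n/\sqrt n$, the denominator is $\sim \tfrac12 \ln n$, so this part of the probability is at most $3\phi_0(z)/\ln n$.

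The main (and only non-routine) obstacle is the $\kU^c$ contribution: Theorem~\ref{thm:zaitsev} bounds $\pp[\kU^c]$ by $c''/n^4$ \emph{uniformly in $z$}, and a $z$-independent constant is not an integrable function of $z$. I would repair this by combining it with the sub-Gaussian concentration of the walk: since the increments of $S$ are bounded, Azuma--Hoeffding (applied to $X_t=S_{\lfloor nt\rfloor}/\sqrt{n/2}$, plus the linear-interpolation part) yields $\pp[\sup_{t\le 1}|X_t|\ge r]\le c_3 e^{-c_4 r^2}$. For $|z|\ge 2$ and $n$ large (so $c\epsilon_n<|z|/2$), reaching $B(z,c\epsilon_n)$ forces $\sup_{t\le 1}|X_t|\ge |z|/2$, hence $\pp[\tau_n\le 1]\le c_3 e^{-c_4|z|^2/4}$. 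Using $\min(a,b)\le \sqrt{ab}$, we get
\[
\pp[\{\tau_n\le 1\}\cap \kU^c]\le \mathbf{1}_{\{|z|\le 2\}}\cdot\frac{c''}{n^4}+\mathbf{1}_{\{|z|>2\}}\cdot\frac{\sqrt{c''c_3}}{n^2}\,e^{-c_4|z|^2/8}.
\]
After multiplying by $\ln n$, both prefactors are $\le 1$ for large $n$, so this term is dominated by $(\mathbf{1}_{\{|z|\le 2\}}+e^{-c_4|z|^2/8})/\ln n$.

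Combining the two contributions, for all $z\neq 0$ and all $n$ large,
\[
\pp[\tau_n\le 1]\le \frac{1}{\ln n}\bigl(3\phi_0(z)+\mathbf{1}_{\{|z|\le 2\}}+e^{-c_4|z|^2/8}\bigr),
\]
and the bracketed function is in $L^1(\C)$, which is the required $\phi$. The only delicate ingredient in this plan is the trick of combining the uniform Zaitsev error with a walk tail estimate so as to produce an \emph{integrable} function of $z$ rather than a constant.
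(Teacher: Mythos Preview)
Your proof is correct and follows the paper's approach: split on $\kU$, apply Lemma~\ref{lem:prelim}(ii) on the good event, and then make the $\kU^c$ contribution integrable in $z$. The paper's fix for the last point is more elementary than your Azuma--Hoeffding argument: boundedness of the steps forces $\pp[\tau_n\le 1]=0$ whenever $|z|>n+1$, so the error $c''n^{-4}$ may be replaced by $c''\min\{n^{-4},|z|^{-3}/n\}\le \phi'(z)/\ln n$ for some $\phi'\in L^1$.
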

\begin{proof}
  By Lemma \ref{lem:prelim} (ii), there is a $\phi\in
  L^1$ and a $c'>0$ such that for sufficiently large $n$,
  \begin{align*}
    \pp[\inf \{t\ge 0 \mid X_t\in B(z,c \epsilon_n)\} \le  1] & \le \pp[t_{(c+1) \epsilon_n}(z) \le  1,\kU]+\pp[\kU^c]\\
    & \le \frac{ \phi(z)}{\ln n}+c' n^{-4}.
  \end{align*}
  Since the length of a step of the random walk is bounded, if $n$
  is sufficiently large and $|z|>n+1$, then $\pp[\inf \{t\ge 0 \mid X_t\in B(z,c
  \epsilon_n)\} \le 1]=0$.  We thus find
  $$\pp[\inf \{t\ge 0 \mid X_t\in B(z,c \epsilon_n)\} \le  1] \le \frac{ \phi(z)}{\ln n}+c'\min\left\{n^{-4},\frac{|z|^{-3}}{n}\right\} \le \frac{\phi'(z)}{\ln n},$$
  as desired.
\end{proof}

\section{Proof of Theorem \ref{thm:maintheo}} \label{sec:proof} 
Let $D\subset \C$ be given by
$$D=\{z_1+i z_2\mid z_1\sqrt{n/2},z_2\sqrt{n/2}\not \in \Z \text{ for any $n$}\}.$$  
This is the set of points which are not on any of the edges of the
rescaled lattices for the random walk, and $\C\setminus D$ has measure $0$.

\vspace{0.2cm}
\noindent Theorem \ref{thm:maintheo} follows by dominated convergence from the
following:
\begin{prop}\label{prop:mainprop}
Let $(S_n, n\ge 0) $ be the simple random walk on
the unit square lattice in the complex plane. For $z\in D$, let $j_n(z)$ be defined by \eqref{jnz} and \eqref{jnz2}. Then
$$\lim_{n\to +\infty} \frac{\E\bigl[\bigl|j_n(z)\bigr|\bigr]}{\ln \ln n} = \frac{\int_0^1 p_s(0,z)\;ds}{\pi}\quad \hbox{for all }z\in D,$$
where $p_s(0,z)=(2\pi s)^{-1} \exp(-|z|^2/2s)$. Moreover there is a function $\phi\in L^1$ such that for all sufficiently large $n$,
$$\frac{\E\bigl[\bigl|j_n(z)\bigr|\bigr]}{\ln \ln n} \le \phi(z)\quad \hbox{for all }z\in D. $$
\end{prop}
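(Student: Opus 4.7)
The approach is to use the strong coupling of Theorem~\ref{thm:zaitsev} to relate $\E[|j_n(z)|]$ to the expected absolute winding of the companion Brownian motion around $z$, and then to exploit the heavy-tailed nature of that winding truncated at the coupling scale $\epsilon_n$. The key tail asymptotic to establish is
$$\pp\bigl[|\tilde{j}(z)|\ge k\bigr]\ =\ \frac{1}{\pi k}\int_0^1 p_s(0,z)\,ds\,(1+o(1))\qquad (k\to+\infty),$$
a pointwise refinement of equation~\eqref{limitindexBM} (indeed, integrating $g(z):=(1/\pi)\int_0^1 p_s(0,z)\,ds$ over $\C$ gives $1/\pi$, in agreement with $\sum_k k\cdot(1/(2\pi k^2))$ read off from \eqref{limitindexBM}). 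Summation against this $1/k$ tail up to the cutoff $K_n:=|\ln\epsilon_n|\sim\tfrac12\ln n$ then produces the factor $\ln K_n=\ln\ln n+O(1)$, with the correct constant.

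\textbf{Main steps.} (i) \emph{Coupling.} On $\kU\cap\{t_{2\epsilon_n}(z)>1\}$ both curves stay at distance $\ge\epsilon_n$ from $z$ while being uniformly $\epsilon_n$-close; a direct trigonometric bound on $|\arg(X_t-z)-\arg(\beta_t-z)|$ yields $|\anglechange{z}{[0,1]}{X}-\anglechange{z}{[0,1]}{\beta}|\le C$, hence $j_n(z)=\tilde{j}(z)+O(1)$. The complementary event has probability $O(\phi(z)/\ln n)$ by Lemma~\ref{lem:prelim}(ii), and the winding on it is controlled, via the skew product, by $Z_{\epsilon_n}\cdot|N|/(2\pi)+O(1)$ with $N$ an independent standard Gaussian; Lemma~\ref{lem:prelim}(i) then shows its contribution to $\E[|j_n(z)|]$ is $O(1)$. (ii) \emph{Cauchy tail for $\tilde{j}(z)$.} By the skew-product representation $\anglechange{z}{[0,1]}{\beta}=\theta^z_{A^z_1}+O(1)$ with $A^z_1=\int_0^1|\beta_s-z|^{-2}\,ds$ and $\theta^z$ an independent $1$D Brownian motion. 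The event $|\tilde{j}(z)|\ge k$ essentially requires $A^z_1\gtrsim k^2$, which in turn requires $\beta$ to approach $z$ within $\approx e^{-k}$; the probability of this is given by the Kakutani hitting asymptotic $\pp[t_{e^{-k}}(z)\le 1]\sim 2\pi k^{-1}\int_0^1 p_s(0,z)\,ds$, and combined with Yor's explicit formula for the law of $\tilde{j}(z)$ (alternatively, the reflected-Brownian-motion description of the clock appearing in the proof of Lemma~\ref{lem:prelim}(i)) one reads off the displayed constant. (iii) \emph{Summation.} The coupling transports the tail to $j_n$ for $k\le(1-\delta)K_n$; Lemma~\ref{lem:prelim}(i) gives super-polynomial decay of $\pp[|j_n(z)|\ge k]$ for $k\ge(1+\delta)K_n$. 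Summing $\E[|j_n(z)|]=\sum_{k\ge 1}\pp[|j_n(z)|\ge k]$ then yields $(1/\pi)\int_0^1 p_s(0,z)\,ds\cdot\ln\ln n\,(1+o(1))$.

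\textbf{Domination and main obstacle.} For the uniform $L^1$ bound, Lemma~\ref{lem:prelim}(iii) provides $\pp[|\tilde{j}(z)|\ge k]\le\phi(z)/k$ with $\phi\in L^1$; transported to $j_n$ via step (i) and summed up to $K_n$, this yields $\E[|j_n(z)|]\le\phi(z)\ln\ln n+O(1)$, the required dominating function for the application of dominated convergence in the deduction of Theorem~\ref{thm:maintheo}. The principal obstacle is step (ii): pinning down the exact constant $(1/\pi)\int_0^1 p_s(0,z)\,ds$ demands assembling the sharp Kakutani asymptotic with Yor's/Werner's description of the winding clock near a close approach, uniformly in $z$. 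A secondary subtlety arises in the intermediate regime $k\approx K_n$, where $\beta$ may enter $B(z,\epsilon_n)$ and the direct coupling breaks down; here one must argue separately that the winding contributed by the random walk inside $B(z,\epsilon_n)$ is $O(K_n)$ on account of the lattice discreteness (each loop around $z$ inside that ball costs at least a bounded number of rescaled steps), and hence cannot upset the leading $\ln\ln n$ asymptotic.
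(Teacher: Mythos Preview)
Your coupling on the good event $\kU\cap\{t_{2\epsilon_n}(z)>1\}$ is correct and essentially reproduces the paper's Proposition~\ref{prop:low index}; step~(ii) is Werner's Lemma~\ref{lem:Wernerlem}. The genuine gap is your control of $j_n(z)$ on the complementary event $\{t_{2\epsilon_n}(z)\le 1\}$, equivalently the tail $\pp[|j_n(z)|\ge k]$ for $k\gtrsim\ln n$. You write that on that event ``the winding on it is controlled, via the skew product, by $Z_{\epsilon_n}\cdot|N|/(2\pi)+O(1)$'', and in step~(iii) that Lemma~\ref{lem:prelim}(i) gives super-polynomial decay of $\pp[|j_n(z)|\ge k]$. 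But $Z_{\epsilon_n}$ and Lemma~\ref{lem:prelim} concern the \emph{Brownian} winding outside $B(z,\epsilon_n)$; they say nothing about the \emph{random-walk} winding accumulated while $X_t$ is inside $B(z,3\epsilon_n)$, which lives at lattice scales below the coupling resolution and is invisible to strong approximation.

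Your fallback (``each loop around $z$ inside that ball costs at least a bounded number of rescaled steps'') yields only the trivial $|j_n(z)|\le n/4$; even refining via the expected occupation time of the ball gives a bound on the near-$z$ winding that is polynomial in $\ln n$, far above $\ln\ln n$. What is missing is \emph{cancellation}. The paper obtains it by decomposing the walk into half-winding excursions between the two rays $\Delta_z^\pm$ of the diagonal through the lattice square containing $z$: each excursion contributes $w(e)=\pm\tfrac12$, and a reflection/time-reversal symmetry makes these signs conditionally fair, so the small-excursion winding has second moment $\tfrac14\,\E[\#\smex]$. Combining the Green's-function bound $\E[\#\smex\mid\tau\le 1]=O((\ln n)^2)$ (there are $O(\ln n)$ diagonal lattice points in the ball, each visited $O(\ln n)$ times) with $\pp[\tau\le 1]\le\phi(z)/\ln n$ gives $\pp\bigl[|\sum_{e\in\smex}w(e)|\ge k\bigr]\le\phi(z)\ln n/k^2$; together with analogous bounds for the medium and large excursions this yields Proposition~\ref{prop:high index}. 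This excursion decomposition is precisely why the result is restricted to the square (and triangular) lattice and needs the full strength of the Zaitsev coupling, and it is the essential mechanism absent from your sketch.
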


\subsection{Sketch of proof}

The main idea of our proof is that there are no small windings in
the rescaled random walk because the granularity of the walk makes it impossible to
approach a point more closely than roughly $(\kappa n)^{-1/2}$.  We
thus expect the number of windings of $\kC_n$ to be roughly the
number of windings of the Brownian motion which stay far from $z$
(relative to $(\kappa n)^{-1/2}$).  This has the effect of eliminating
points with large winding numbers, since points with many windings
generally have many close windings.  We will show that
the number of points with winding number $\le \ln n$ remains roughly
the same, but there are many fewer points with winding number $>\ln
n$.

Accordingly, we will bound the total winding number from below by
considering just the points with small winding number:
\begin{prop}[Points with low index]\label{prop:low index}
  Let $(S_i,i\ge 0)$ be a random walk on $\R^2$ with i.i.d.\ bounded
  increments, $\E[S_{i}-S_{i-1}]=0$, and $\cov (S_i-S_{i-1})=\kappa
  I$.  Let $X_t$ and $j_n(z)$ be as in Sec.~\ref{sec:prelims}.  Then
  $$ \lim_{n\to +\infty} \frac{1}{\ln \ln n} \sum_{k=1}^{\ln n} \pp[|j_n(z)|\ge k] = \frac{1}{\pi}\int_0^1 p_s(0,z)\;ds \quad \hbox{for all }z\in D,$$
  and there is a $\phi\in L^1$ such that for all sufficiently large $n$, 
  $$\frac{1}{\ln \ln n} \sum_{k=1}^{\ln n} \pp[|j_n(z)|\ge k] \le \phi(z) \quad \hbox{for all }z\in D.$$
\end{prop}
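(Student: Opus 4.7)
The strategy is to use Zaitsev's coupling (Theorem \ref{thm:zaitsev}) to replace $(X_t)$ by a Brownian motion $(\beta_t)$, reduce to a statement about Brownian winding around $z$, and then exploit the skew-product decomposition together with the Spitzer-type asymptotic for hitting small discs.  Write $I(z):=\int_0^1 p_s(0,z)\,ds$; throughout, $\phi$ denotes an $L^1$ function of $z$ that may change from line to line.

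On the coupling event $\kU$ I further impose the event $\mathcal{V}_n:=\{\inf_{s\le 1}|\beta_s-z|\ge 2\epsilon_n\}$, which by Lemma \ref{lem:prelim}(ii) has complement of probability at most $2\phi(z)/\log n$.  On $\kU\cap\mathcal{V}_n$ the piecewise-linear curve $X$ lies in the $\epsilon_n$-tube around $\beta$ and in particular at distance at least $\epsilon_n$ from $z$, so the two winding angles $\anglechange{z}{[0,1]}{X}$ and $\anglechange{z}{[0,1]}{\beta}$ agree up to a universal additive constant $C$.  For every $k\ge 1$ this yields
\[\bigl|\pp[|j_n(z)|\ge k]-\pp[|\tilde j(z)|\ge k]\bigr|\le \pp[\kU^c\cup\mathcal{V}_n^c]+\pp\bigl[|\tilde j(z)|\in[k-C,k+C]\bigr],\]
and summing over $1\le k\le\log n$, the first error contributes $O(\phi(z))$ while the second telescopes to $O(1)\cdot\E[\min(|\tilde j(z)|,\log n)]$, both absorbed into the final answer after division by $\log\log n$.

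The analytic heart of the proof is then the sharp asymptotic $\pp[|\tilde j(z)|\ge k]\sim I(z)/(\pi k)$ as $k\to\infty$.  I would derive it by conditioning on the closest-approach distance $\delta:=\inf_{s\le 1}|\beta_s-z|$: Spitzer's hitting-time asymptotic (the sharp form underlying Lemma \ref{lem:prelim}(ii), cf.\ \cite{Sp2}) gives $\pp[\delta\le\epsilon]\sim 2\pi I(z)/|\log\epsilon|$ as $\epsilon\to 0$, and by the skew-product representation together with Brownian scaling, the conditional law of $|\tilde j(z)|$ given $\delta=\epsilon$ is well-approximated, as $\epsilon\to0$, by $(|\log\epsilon|/(2\pi))\cdot|Y|$ for an explicit random variable $Y$ whose distribution does not depend on $\epsilon$.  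Integrating in $\delta$ and applying the substitution $u=2\pi k/|\log\epsilon|$ yields $\pp[|\tilde j(z)|\ge k]\sim I(z)\E|Y|/k$, with $\E|Y|=1/\pi$ falling out of the explicit conditional distribution.  Summing over $1\le k\le\log n$ gives $\sum_{k=1}^{\log n}\pp[|\tilde j(z)|\ge k]\sim (I(z)/\pi)\log\log n$, which together with the previous paragraph proves the limit.

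The dominated bound by an $L^1$ function is the easier half: by the skew product, $|\tilde j(z)|\le (|W_{Z_{2\epsilon_n}^2}|+C)/(2\pi)$ for a one-dimensional Brownian motion $W$ independent of $Z_{2\epsilon_n}$, so Lemma \ref{lem:prelim}(iii) and the Gaussian tail yield $\pp[|\tilde j(z)|\ge k]\le \phi(z)/k$; combined with the coupling, $\sum_{k\le\log n}\pp[|j_n(z)|\ge k]\le \phi(z)\log\log n+O(\phi(z))$, and dividing by $\log\log n$ leaves an $L^1$ function.  The main obstacle in the whole program is the sharp asymptotic described above: Lemma \ref{lem:prelim} provides only one-sided bounds, and upgrading them to the two-sided asymptotic with the explicit constant $1/\pi$ requires a careful analysis of the joint law of closest approach and winding angle via the skew-product, combined with a sharp form of the Spitzer hitting-time asymptotic.
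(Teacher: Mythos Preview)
Your reduction to the Brownian winding via the coupling is essentially the same route the paper takes, but two simplifications are available. First, on $\kU\cap\{z\notin W_{\epsilon_n}\}$ the straight-line homotopy between the loops $\kC_n$ and $\tilde{\kC}$ stays inside $W_{\epsilon_n}$ and hence avoids $z$; therefore the indices $j_n(z)$ and $\tilde j(z)$ are \emph{exactly equal}, not merely within a constant $C$. This removes the extra term $\pp[|\tilde j(z)|\in[k-C,k+C]]$ from your comparison and gives the cleaner bound
\[
\bigl|\pp[|j_n(z)|\ge k]-\pp[|\tilde j(z)|\ge k]\bigr|\le \pp[z\in W_{\epsilon_n}]+\pp[\kU^c].
\]
Second, and more importantly, the step you flag as ``the main obstacle'' --- the sharp two-sided asymptotic $\pp[|\tilde j(z)|\ge k]\sim I(z)/(\pi k)$ with the explicit constant --- is precisely Werner's Lemma~5 in \cite{W} (restated in the paper as Lemma~\ref{lem:Wernerlem}): $x^2\,\pp[\anglechange{z}{[0,1]}{\beta}\in[x,x+2\pi]]\to 2\pi I(z)$, together with an $L^1$ domination. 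The paper simply cites this; your proposed rederivation via conditioning on the closest-approach distance is plausible in spirit but is exactly the nontrivial analysis Werner already carried out, and your sketch does not pin down the law of the limiting variable $Y$ or the identity $\E|Y|=1/\pi$. Once Lemma~\ref{lem:Wernerlem} is invoked, both the limit and the $L^1$ bound follow in a few lines.
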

Werner's \cite{W} results (see Lemma~\ref{lem:Wernerlem} in the next
subsection) imply that the proposition also holds for the Brownian
motion; that is, when $j_n(z)$ is replaced by $\tilde{j}(z)$, the
number of times that the Brownian motion winds around $z$.  This gives
the lower bound required by Prop.~\ref{prop:mainprop}.

In Subsection~\ref{sec:low index}, we prove this proposition through
strong approximation.  Since the random walk is usually close to the
Brownian motion, $j_n(z)$ and $\tilde{j}(z)$ can differ only if $z$ is
close to the Brownian motion.  Using Spitzer's estimate of the area of
the Wiener sausage, we will show that most of the points with winding
numbers $\le \ln n$ lie far from the curve.  

This proposition relies mainly on strong approximation and not on
properties of the random walk, so we can prove it for random walks
with arbitrary increments.  Furthermore, it can be proved using a
weaker embedding theorem, such as the Skorokhod embedding theorem.

We get an upper bound by showing that there are few points with
winding number $>\ln n$.  In fact, we show that
\begin{prop}\label{prop:high index}
  Under the hypotheses of Proposition \ref{prop:mainprop}, there is a function
  $\phi \in L^1$ such that for all $k\ge \ln n$, 
  $$\pp[|j_n(z)|\ge k]\le \frac{\phi(z)}{k} \quad \hbox{for all }z\in D.$$

  \noindent Furthermore, there is a $\phi'\in L^1 $ such that for any $\epsilon>0$,
  any $k\ge (\log n)^{1+\epsilon}$ and $n$ large enough, 
  $$\pp[|j_n(z)|\ge k]\le \frac{\ln n}{k^2}\phi(z)\quad \hbox{for all }z\in D.$$
\end{prop}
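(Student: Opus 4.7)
The plan is to use Zaitsev's strong approximation to couple the rescaled walk $X$ with a Brownian motion $\beta$, and transfer tail estimates for the winding of $\beta$ around $z$ (controlled via the skew-product representation) to estimates for the winding of $X$. Let $\kB=\{\inf_{s\le 1}|\beta_s-z|\ge c\epsilon_n\}$ for a constant $c>2$. Then $\pp[\kU^c]\le c''/n^4$ by Thm.~\ref{thm:zaitsev} and $\pp[\kB^c]\le\phi_1(z)/\log n$ by Lemma~\ref{lem:prelim}(ii). On $\kU\cap\kB$ the walk stays at distance $\ge(c-1)\epsilon_n$ from $z$ and is $\epsilon_n$-close to $\beta$, so the straight-line homotopy between the two curves avoids $z$, yielding
\[
|\anglechange{z}{[0,1]}{X}-\anglechange{z}{[0,1]}{\beta}|\le C_0,
\]
hence $|j_n(z)|\le|\anglechange{z}{[0,1]}{\beta}|/(2\pi)+C_1$.

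Using the skew product $\anglechange{z}{[0,1]}{\beta}=\theta^z_{A^z_1}-\theta^z_0$ with $\theta^z$ an independent one-dimensional Brownian motion, and noting that $A^z_1=Z^2_{c\epsilon_n}$ on $\kB$, the reflection principle applied conditionally on the radial part gives
\[
\pp\bigl[|\anglechange{z}{[0,1]}{\beta}|\ge 2\pi(k-C_1),\,\kB\bigr]\le \E\bigl[4 e^{-2\pi^2(k-C_1)^2/Z^2_{c\epsilon_n}}\bigr]=4\int_0^\infty \pp\bigl[Z_{c\epsilon_n}\ge\pi(k-C_1)\sqrt{2/v}\bigr]\,e^{-v}\,dv,
\]
the last equality by the layer-cake representation.

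For the first bound, substitute $\pp[Z_{c\epsilon_n}\ge x]\le\phi_2(z)/x$ from Lemma~\ref{lem:prelim}(iii); a direct integration yields a contribution of order $\phi_2(z)/k$, while the bad-event terms $O(1/n^4+\phi_1(z)/\log n)$ are dominated by $\phi(z)/k$ when $k\ge\log n$.

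For the sharper bound, invoke instead Lemma~\ref{lem:prelim}(i), which gives the superpolynomial decay $\pp[Z_{c\epsilon_n}\ge M]\le g(2M/\log n)$ with $g\in\kG$. Split the layer-cake integral at $v_0=2\pi^2 k^2/(\log n)^2$: on $v\le v_0$ the $Z$-threshold exceeds $\log n$ and one uses (i) with the inequality $g(y)\le C_c y^{-c}$, while on $v>v_0$ the weight $e^{-v}$ is already superexponentially small in $k/\log n$ and the crude bound (iii) suffices. After a change of variables this produces an overall bound of order $C_c\phi_2(z)(\log n/k)^c$ for any integer $c$. Since $k\ge(\log n)^{1+\epsilon}$, one has $\log n/k\le(\log n)^{-\epsilon}$, and taking $c\ge 2+1/\epsilon$ forces $(\log n/k)^c\le\log n/k^2$, as required. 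The main technical obstacle is the bad-event term $\pp[\kB^c]\le\phi_1(z)/\log n$, which exceeds the target $\phi(z)\log n/k^2$ whenever $k\gg\log n$; to absorb it, the event $\kB$ has to be refined with a $k$-dependent radius $\delta_k\asymp e^{-k^2/\log n}$ (so that Lemma~\ref{lem:prelim}(ii) yields the sharper $\phi_1(z)\log n/k^2$), while the walk's possible approaches within $\epsilon_n$ that are no longer covered by this smaller $\kB$ are treated separately via Cor.~\ref{cor:RWnearness}.
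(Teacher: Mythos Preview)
Your approach has a genuine gap that you yourself flag but do not close. The obstacle is the event $\kB^c=\{\inf_{s\le 1}|\beta_s-z|<c\epsilon_n\}$, of probability $\asymp\phi_1(z)/\log n$. On this event strong approximation gives no control whatsoever over $\anglechange{z}{[0,1]}{X}$, and $\phi_1(z)/\log n$ is \emph{not} dominated by $\phi(z)/k$ for $k>\log n$: your assertion to the contrary in the first part is simply false (take $k=(\log n)^2$). The patch you propose at the end does not work either. Once you shrink the avoidance radius for $\beta$ to $\delta_k\ll\epsilon_n$, the Brownian motion may still enter $B(z,\epsilon_n)$ on the refined event, the straight-line homotopy between $X$ and $\beta$ can then cross $z$, and the comparison $|j_n(z)|\le|\anglechange{z}{[0,1]}{\beta}|/(2\pi)+C_1$ collapses. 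Invoking Cor.~\ref{cor:RWnearness} to excise the event that $X$ itself approaches $z$ only reintroduces a term of order $\phi(z)/\log n$, the very size you were trying to beat.

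The paper handles the close-approach event not by avoidance but by a direct analysis of the winding accrued while near $z$, and this is where the nearest-neighbor lattice hypothesis is essential. One decomposes $[0,1]$ into \emph{excursions} between successive hits of the two half-diagonals through the lattice square containing $z$; each excursion contributes exactly $\pm 1/2$ to the winding, and by a reversal/reflection symmetry these signs are i.i.d.\ fair coin flips independent of the excursion endpoints. Excursions are then sorted by scale: \emph{large} ones stay outside $2\epsilon_n$ (in the coupled Brownian sense) and are controlled by essentially your skew-product argument; \emph{medium} ones each force an annulus crossing, and their number has tail $\le\phi(z)(\log n)/k^2$ via Lemma~\ref{lem:prelim}(ii) and a geometric-trials bound; \emph{small} ones start within $\epsilon_n$ of $z$, and here the i.i.d.\ $\pm 1/2$ structure gives, by Chebyshev, $\pp\bigl[|\sum_{e\in\smex}w(e)|\ge k\bigr]\le \E[\#\smex]/(4k^2)$. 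Conditioning on the walk ever entering $B(z,2\epsilon_n)$ (probability $\le\phi(z)/\log n$), the expected number of small excursions is $O((\log n)^2)$, since only $O(\log n)$ diagonal lattice points lie in $B(z,\epsilon_n)$ and each is visited $O(\log n)$ times in mean. The product $(\log n)^{-1}\cdot(\log n)^2/k^2=(\log n)/k^2$ gives the target. It is precisely this variance gain from the lattice symmetry, absent from any purely strong-approximation argument, that converts the unavoidable $1/\log n$ close-approach probability into the required bound.
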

The proof of the proposition proceeds by decomposing the random walk
into pieces that are close to and far from a given point $z$; this
technique is similar to that used by B\'elisle \cite{Be}.  We bound
the amount of winding accumulated by the faraway pieces using strong
approximation and Lemma \ref{lem:prelim}.  We bound the winding of the
nearby pieces by noting that the random walk usually spends little
time near $z$, and so does not accumulate a large winding number while
close to $z$.  This is different from the case of the Brownian motion,
in which most of the winding around a point with high winding number
occurs very close to the point.

The proof of this proposition requires stronger machinery than the
proof of the lower bound.  In particular, it requires that
$S_n$ be the simple random walk on the square grid.  Furthermore, we
need the full power of Thm.~\ref{thm:zaitsev}, since
Thm.~\ref{thm:zaitsev} allows us to choose $\epsilon_n$ so that the
$\epsilon_n$-neighborhood of $z$ contains roughly $(\ln n)^2$ grid
points.

Assuming the two propositions, we can prove Prop.~\ref{prop:mainprop}.
\begin{proof}[Proof of Proposition \ref{prop:mainprop}]
Note that by summation by parts,
\begin{align}
\label{ingred1}
\E\bigl[\bigl|j_n(z)\bigr|\bigr] & =\sum_{k=-\infty}^{+\infty} |k| \pp[j_n(z)=k] \\
\nonumber &=\sum_{k=1}^{+\infty} \pp[|j_n(z)|\ge k].
\end{align}
Moreover Proposition \ref{prop:high index} implies that for $n$ large enough,
\begin{eqnarray}
\label{ingred3}
\frac{1}{\ln \ln n}\sum_{k=\ln n}^{\infty} \pp[|j_n(z)|\ge k]\le \epsilon \phi'(z) + \frac{\phi(z)}{\ln\ln n}\quad \hbox{for all }z\in D.
\end{eqnarray}
Proposition \ref{prop:mainprop} now immediately follows from
\eqref{ingred1}, Proposition \ref{prop:low index} and
\eqref{ingred3}. \end{proof} 
We will prove Proposition \ref{prop:low index} in the next subsection and Proposition \ref{prop:high index}
in subsections \ref{sec:decomp}--\ref{sec:largeexc}.

\subsection{Points with low index}
\label{sec:low index}
We concentrate here on Proposition \ref{prop:low index}. The proof combines
strong approximation \cite{Zaitsev}, Spitzer's estimate on the area of the
Wiener sausage \cite{Sp2}, and Werner's estimate of
$\pp[\tilde{j}(z)=k]$ \cite{W}.
Recall that Werner showed that
\begin{lem}[\cite{W}, Lemma 5]\label{lem:Wernerlem}\ \\
  \begin{enumerate}
  \item For all $t>0$ and all $z\neq 0$,
    $$x^2 \pp[\anglechange{z}{[0,t]}{\beta}\in [x,x+2\pi]]\mathop{\to}^{x\to \infty} 2\pi \int_0^t p_s(0,z)\; ds.$$
  \item For all $t>0$, there is a function $\phi_t\in L^1$ such that for all $z\neq 0$ and all $x$ large enough,
    $$x^2 \pp[\anglechange{z}{[0,t]}{\beta}\in [x,x+2\pi]]\le \phi_t(z).$$
  \end{enumerate}
\end{lem}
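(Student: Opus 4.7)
My plan is to exploit the skew-product representation $\beta_r - z = \exp(\rho^z_{A^z_r} + i\theta^z_{A^z_r})$ recalled in Section~\ref{sec:prelims}: here $(\rho^z, \theta^z)$ are independent real Brownian motions and the clock $A^z_t = \int_0^t |\beta_s - z|^{-2}\,ds$ is measurable with respect to the filtration of the radial part. Since $\anglechange{z}{[0,t]}{\beta} = \theta^z_{A^z_t} - \theta^z_0$, conditioning on $A^z_t$ reduces the winding to a centred Gaussian with random variance $A^z_t$, and the one-dimensional local limit theorem gives
\begin{equation*}
\pp\bigl[\anglechange{z}{[0,t]}{\beta}\in [x,x+2\pi]\bigm| A^z_t\bigr] = \frac{2\pi}{\sqrt{2\pi A^z_t}}\,e^{-x^2/(2A^z_t)}\bigl(1+o(1)\bigr)
\end{equation*}
as $x \to \infty$, uniformly on $\{\alpha x^2 \le A^z_t \le \alpha^{-1} x^2\}$ for every $\alpha \in (0,1)$.

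Taking expectation and setting $F(s) := \pp[A^z_t \ge s]$, the problem reduces to the asymptotics of $I(x) := x^2\,\E\bigl[(A^z_t)^{-1/2}\,e^{-x^2/(2A^z_t)}\bigr]$. The substitution $A^z_t = x^2 v$ concentrates the integrand on $A^z_t \asymp x^2$, and a straightforward computation (splitting into the ranges $A^z_t \le \alpha x^2$, $\alpha x^2 \le A^z_t \le \alpha^{-1} x^2$, and $A^z_t \ge \alpha^{-1} x^2$, and controlling the extremal ranges via Lemma~\ref{lem:prelim}) shows
\begin{equation*}
I(x)\ \longrightarrow\ \lim_{s\to\infty} \sqrt{s}\,F(s),
\end{equation*}
whenever this latter limit exists. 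Combining with the Gaussian normalisation gives $\lim_{x\to\infty} x^2\,\pp[\anglechange{z}{[0,t]}{\beta}\in [x,x+2\pi]] = \sqrt{2\pi}\,\lim_{s\to\infty}\sqrt{s}\,F(s)$, so statement (1) is equivalent to the tail estimate $F(s) \sim c(t,z)/\sqrt{s}$ with $c(t,z) = \sqrt{2\pi}\int_0^t p_s(0,z)\,ds$.

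The main analytic content of the lemma lies in this tail asymptotic for $A^z_t$. Large values of $A^z_t$ are produced by excursions of $\beta$ very close to $z$: under the skew-product, the clock reaches level $s$ essentially when the radial log-process $\rho^z$ visits the level $-\sqrt{s}$, which amounts to the event $\{t_{e^{-\sqrt{s}}}(z) \le t\}$ up to a deterministic factor coming from the excursion structure of the reflected radial process. The sharp form of Spitzer's hitting estimate
\begin{equation*}
\pp\bigl[t_\epsilon(z) \le t\bigr] \ \sim\ \frac{2\pi\int_0^t p_u(0,z)\,du}{|\log\epsilon|} \qquad(\epsilon \downarrow 0),
\end{equation*}
applied with $\epsilon = e^{-\sqrt{s}}$, then gives $F(s) \sim c(t,z)/\sqrt{s}$ with $c(t,z)$ matching the required constant after tracking the excursion factor.

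For (2) I would rerun the same decomposition with the asymptotic replaced by the uniform upper bound $F(s) \le C\bigl(\int_0^t p_u(0,z)\,du\bigr)\,s^{-1/2}$, which follows directly from Lemma~\ref{lem:prelim}(ii). This produces a dominating function with $\phi_t(z) \le C'\int_0^t p_u(0,z)\,du$; since $\int_\C \int_0^t p_u(0,z)\,du\,dz = t < \infty$, the function $\phi_t$ is in $L^1(\C)$. The main obstacle in the proof is the sharp tail asymptotic of $A^z_t$ with the correct constant; the one-sided bound needed for (2) is significantly easier and follows immediately from Lemma~\ref{lem:prelim}(ii).
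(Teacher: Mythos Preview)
The paper does not give its own proof of this lemma: it is quoted verbatim from Werner \cite{W}, Lemma~5, and used as a black box in the proof of Proposition~\ref{prop:low index}. So there is no argument in the paper to compare your sketch against.

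That said, your outline is a natural strategy and the reduction is correct: conditioning on $A^z_t$ via the skew product does turn $\anglechange{z}{[0,t]}{\beta}$ into a centred Gaussian with variance $A^z_t$, and a Tauberian-type computation (controlling the ranges $A^z_t\le\alpha x^2$ and $A^z_t\ge\alpha^{-1}x^2$ with Lemma~\ref{lem:prelim}(iii) applied with $\epsilon\downarrow 0$) does show that
\[
x^2\,\pp\bigl[\anglechange{z}{[0,t]}{\beta}\in[x,x+2\pi]\bigr]\ \longrightarrow\ \sqrt{2\pi}\,\lim_{s\to\infty}\sqrt{s}\,\pp[A^z_t\ge s],
\]
so part (1) is indeed equivalent to the sharp tail $\pp[A^z_t\ge s]\sim\sqrt{2\pi}\int_0^t p_u(0,z)\,du\,/\sqrt{s}$. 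Part (2) then follows from the one-sided bound $\pp[A^z_t\ge s]\le \phi(z)/\sqrt{s}$, which is exactly Lemma~\ref{lem:prelim}(iii) at $\epsilon=0$; this half of your argument is complete.

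The genuine gap is in part (1): the identification of the constant in the tail of $A^z_t$. Your heuristic ``$A^z_t\ge s$ essentially when $\rho^z$ reaches $-\sqrt{s}$, i.e.\ when $t_{e^{-\sqrt{s}}}(z)\le t$'' is qualitatively right but does not determine the constant; you yourself defer this to an unspecified ``deterministic factor coming from the excursion structure''. A naive substitution of Spitzer's estimate (which in the paper's normalisation \eqref{sausage} has constant $\pi$, not the $2\pi$ you wrote) gives $\pi\int_0^t p_u\,du/\sqrt{s}$, which is \emph{not} the required $\sqrt{2\pi}\int_0^t p_u\,du/\sqrt{s}$. The correct constant requires analysing the exponential functional $\int_0^s e^{2\rho_u}\,du$ (since $A^z_t\ge s$ is exactly the event $\int_0^s e^{2\rho_u}\,du\le t$), and this is where the real work in Werner's proof lies; it does not reduce to the hitting-time estimate alone.
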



\vspace{0.3cm} \noindent \textit{Proof of Proposition \ref{prop:low index}}:
For $\epsilon>0$, let
$W_\epsilon$ be the $\epsilon$-neighborhood of the loop $\tilde{\kC}$ formed by connecting the endpoints of the Brownian motion.
Since $W_\epsilon$ is mostly
made up of the Wiener sausage of $\beta$, Spitzer's estimate
\cite{Sp2} (see also \cite{LG}) shows that for all $z$, 
\begin{equation}
\label{sausage}
\lim_{\epsilon\to 0} |\ln \epsilon| \pp[z\in W_\epsilon] = \pi \int_0^1 p_s(0,z)\;ds,
\end{equation}
and that there is a $\phi\in L^1$ such that
\begin{equation}
\label{sausagephi}
\pp[z\in W_\epsilon] \le \frac{\phi(z)}{|\ln \epsilon|}.
\end{equation}
Observe that on the set $\kU$, the straight-line homotopy between $\tilde{\kC}$ and $\kC_n$
lies entirely in $W_{\epsilon_n}$. So if $z\not\in W_{\epsilon_n}$,
then $j_n(z)=\tilde{j}(z)$.  Thus
$$\bigl|\pp[|j_n(z)|\ge k,\ \kU]-\pp[|\tilde{j}(z)|\ge k,\ \kU]\bigr|\le \pp[z\in W_\epsilon].$$
On the other hand, if $j_n(z)\ne \tilde{j}(z)$, then one of them is non-zero, so $|z|\le \sup_{0\le t\le 1} |X_t|$ or $|z|\le \sup_{0\le t\le 1} |\beta_t|$. Therefore
\begin{align*}
\bigl|\pp[|j_n(z)|\ge k,\ \kU^c]-\pp[|\tilde{j}(z)|\ge k,\ \kU^c]\bigr| & \le \pp\left[\sup_{0\le t\le 1} |X_t|>|z|,\ \kU^c\right]+\pp\left[\sup_{0\le t\le 1} |\beta_t|>|z|,\ \kU^c\right] \\
&\le \pp[\kU^c]1_{\{|z|<n\}}+ \min \left\{ \pp\left[\sup_{0\le t\le 1} |\beta_t|>|z|\right],\ \pp[\kU^c]\right\}.
\end{align*}
Since $\pp[\kU^c]=O(n^{-4})$, there is a $\phi'\in L^1$ such that
$$\bigl|\pp[|j_n(z)|\ge k,\ \kU^c]-\pp[|\tilde{j}(z)|\ge k,\ \kU^c]\bigr|\le \frac{\phi'(z)}{n}.$$
Thus
\begin{eqnarray}
\label{pjnz}
\bigl|\pp[|j_n(z)|\ge k]-\pp[|\tilde{j}(z)|\ge k]\bigr|\le \pp[z\in W_{\epsilon_n}]+\frac{\phi'(z)}{n},
\end{eqnarray}
and by \eqref{sausage},
$$ \biggl|\sum_{k=1}^{\ln n}\pp[|j_n(z)|\ge k]-\sum_{k=1}^{\ln n}\pp[|\tilde{j}(z)|\ge k]\biggr|\le (\ln n) \pp[z\in W_{\epsilon_n}]+\frac{\phi'(z)\ln n}{n}$$
is bounded. Thus 
\begin{align*}
\lim_{n\to +\infty} \frac{1}{\ln \ln n} \sum_{k=1}^{\ln n} \pp[|j_n(z)|\ge k]&=\lim_{n\to\infty} \frac{1}{\ln \ln n}\sum_{k=1}^{\ln n}\pp[|\tilde{j}(z)|\ge k]\\
&=\frac{1}{\pi}\int_0^1 p_s(0,z)\;ds,
\end{align*}
where the last equality follows from Lemma \ref{lem:Wernerlem}. 
Finally, by \eqref{pjnz}, \eqref{sausagephi} and Lemma \ref{lem:Wernerlem}, there is a $\phi''\in L^1$ such that
$$\pp[|j_n(z)|\ge k]\le \frac{\phi''(z)}{k}+\frac{\phi''(z)}{\ln n},$$
and so
$$\sum_{k=1}^{\ln n}\pp[|j_n(z)|\ge k]\le \phi''(z)\log\log n+\phi''(z).$$
This concludes the proof of Proposition \ref{prop:low index}. \hfill $\square$

\subsection{Decomposing the winding number} \label{sec:decomp} In
the next subsections, we will prove Proposition \ref{prop:high index}.

To bound the probability that a point will have large index, we will
introduce a decomposition of the winding number into small, medium,
and large windings, depending on their distance from the point.  We
will then use different methods to bound the number of windings.
During large windings, the random walk stays away from the point and
is well-approximated by the Brownian motion.  The number of medium and small
windings is bounded by the fact that the random walk spends little
time near the point, and while the bounds on the medium and large
windings only use strong approximation, the bound on the small windings
uses the full power of Thm.~\ref{thm:zaitsev}.

Define the stopping times $(\tau_i, i\ge 0)$ and $(\sigma_i,i\ge 1)$ by 
\begin{align*}
\tau_0&:=0, \\
\sigma_i&:=\inf \{t\ge \tau_{i-1}\mid |\beta_t-z|<2\epsilon_n\}\quad \forall i\ge 1,\\
\tau_i&:=\inf\{t\ge \sigma_i \mid |\beta_t-z|>4\epsilon_n\}\quad \forall i\ge 1.
\end{align*}
These times divide the curve into pieces close to $z$ and far from $z$.  If $t\in
[\sigma_i,\tau_i)$, then $|\beta_t-z|\le 4\epsilon_n$; if $t\in
[\tau_i,\sigma_{i+1})$, then $|\beta_t-z|\ge 2\epsilon_n$. Let 
$$\kT:= \bigcup_{i\ge 0}[\tau_i,\sigma_{i+1})\cap [0,1];$$
this is a.s.\ a finite union of intervals during which $|\beta_t-z|\ge 2\epsilon_n$.

We also break the random walk into excursions with winding number $\pm
1/2$; this decomposition relies on the assumption that $(S_i,i\ge 0)$ is a
nearest-neighbor walk on a square lattice, and this is the main step
that requires this assumption.  Let
$\hat{z}$ be the center of the square in the rescaled lattice
containing $z$; since $z\in D$, this is unique.  Let
$$\Delta_z^\pm:=\hat{z}\pm(1+i)\R^+$$
be the halves of the diagonal line
through $\hat{z}$.  If the random walk hits $\Delta_z^+$ before
$\Delta_z^-$, define
\begin{align*}
e_0&:=\inf\{t\ge 0 \mid X_t \in \Delta_z^+\}\\
e_{2i+1}&:=\inf\{t\ge e_{2i} \mid X_t \in \Delta_z^-\}\\
e_{2i}&:=\inf\{t\ge e_{2i-1} \mid X_t \in \Delta_z^+\};
\end{align*}
otherwise, define the $e_i$'s with $+$ and $-$ switched.  Note that
$e_i\in\Z/n$ for all $i$.  We call intervals of the form
$[e_i,e_{i+1}]$ excursions with respect to $z$, and let $\kE=\kE(z)$ be the set of all such excursions
which are subsets of $[0,1]$.  If $e=[t_1,t_2]$ is an excursion, set 
$w(e):=\anglechange{z}{[t_1,t_2]}{X}/2\pi=\pm 1/2$.  Then
$$\left|\frac{\anglechange{z}{[0,1]}{X}}{2\pi}-\sum_{e\in \excu}w(e)\right|\le 2.$$

We will classify these excursions as small, medium, or large.  Let
$$\smex=\smex(z):=\{[u,v]\in \excu \mid |X_u-z|\le \epsilon_n\},$$
be the set of
excursions starting close to $z$; we call them {\em small excursions}. Let
$$\lgex=\lgex(z):=\{[u,v]\in \excu \mid  [u,v] \subset \kT\}.$$
These {\em large
  excursions} stay far from $z$.  On the set $\kU$, these two sets
are disjoint.  Let 
$$\mdex=\mdex(z):=\excu\setminus(\smex\cup\lgex)$$
be the set of {\em medium
  excursions}, so that every excursion is either small, medium, or
large.

Proposition \ref{prop:high index} is a consequence of the following lemmas.
\begin{lem}
\label{lem:smallmed}
There exists a function $\phi \in L^1$, such that for all sufficiently large $n$, all $k\ge 1$, and all $z\in D$,
\begin{enumerate}
\item[(i)] $\displaystyle \pp\biggl[\biggl|\sum_{e\in \smex}w(e)\biggr|\ge k\biggr]\le \frac{\ln n}{k^2}\phi(z).$ \label{lem:smallmed:small}
\item[(ii)] $\displaystyle \pp\biggl[\biggl|\sum_{e\in \mdex}w(e)\biggr|\ge k\biggr]\le \frac{\ln n}{k^2}\phi(z).$
\end{enumerate}
\end{lem}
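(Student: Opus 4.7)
The plan is to derive both inequalities from a single Chebyshev estimate that exploits a reflection symmetry of the simple random walk on the square lattice, producing conditionally independent signs $w(e)\in\{\pm 1/2\}$ with mean zero.

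Let $L$ be the full diagonal line through $\hat z$ of slope one, so that $L\supset\Delta_z^+\cup\Delta_z^-$, and let $\sigma$ denote reflection across $L$. Since $\hat z$ is a dual-lattice point and $L$ has slope one, $\sigma$ preserves the rescaled lattice $(\sqrt{2/n})\Z^2$ and permutes the four step directions $\{\pm e_1,\pm e_2\}$, so $\sigma$ is a measure-preserving involution of the SRW; moreover it fixes $\Delta_z^\pm$ pointwise. For each excursion $e=[e_i,e_{i+1}]$, applying $\sigma$ only to the walk segment during $[e_i,e_{i+1}]$ yields another admissible SRW path with the same endpoints $X_{e_i},X_{e_{i+1}}$ and the same small/large status (these depend on $X_{e_i}\in L$ and on data that is invariant under reflecting a single excursion). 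Naive orientation-reversal flips winding only around fixed points of $\sigma$, and $\sigma(z)\ne z$ in general; the gap is closed by the geometric observation that $z$ and $\sigma(z)$ both lie in the open lattice square centered at $\hat z$ (we use $z\in D$), a square that the walk $X$ cannot enter since $X$ moves along lattice edges. Thus $z$ and $\sigma(z)$ sit in the same face of $\C\setminus X$ and have equal $X$-winding, so reflection sends $w(e,z)$ to $-w(e,z)$. Since the reflections on distinct excursions may be performed independently without changing the law, conditional on the skeleton $\kX$ (excursion endpoints together with the walk outside excursions), the signs $\{w(e)\}_{e\in\excu}$ are iid uniform on $\{\pm 1/2\}$.

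Chebyshev applied conditional on $\kX$ then gives
\begin{equation*}
\pp\biggl[\biggl|\sum_{e\in\smex}w(e)\biggr|\ge k\biggr]\le \frac{\E[|\smex|]}{4k^2},
\end{equation*}
and an analogous bound with $|\smex|$ replaced by $|\mdex|$ follows by replacing $\mdex$ by its $X$-measurable proxy $\{e\in\excu\setminus\smex: X|_e\text{ enters }B(z,3\epsilon_n)\}$, which on the event $\kU$ contains $\mdex$, and absorbing $\kU^c$ via $\pp[\kU^c]=O(n^{-4})$. It remains to show $\E[|\smex|],\E[|\mdex|]\le \phi(z)\ln n$ for some $\phi\in L^1$. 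Each small excursion starts at a crossing of $L$ by a walk step inside $B(z,\epsilon_n)$, so $|\smex|$ is controlled by the number of visits of the walk to the $O(\ln n)$ lattice points within $O(\sqrt{2/n})$ of $L\cap B(z,\epsilon_n)$. The standard 2D SRW Green function estimate $G_n(0,y)=O(\ln(n/|y|^2)\vee 1)$ with $y=z\sqrt{n/2}$ gives $O(\ln(1/|z|))$ expected visits per point for $|z|\lesssim 1$ and rapid decay for $|z|\gtrsim 1$; summing produces the required bound with $\phi(z)\asymp\ln(1/|z|)$ at the origin and integrable at infinity. For medium excursions, on $\kU$ every $e\in\mdex$ has $X|_e$ entering $B(z,3\epsilon_n)$, so $|\mdex|$ is bounded by the number of visits of $X$ to the $O(\ln n)$ lattice points near $\partial B(z,3\epsilon_n)$, yielding a bound of the same form.

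The main obstacle is making the reflection argument go through when $z$ lies off the axis $L$: the orientation reversal naively flips the winding around $\sigma(z)$ rather than around $z$. The resolution is the observation that $z$ and $\sigma(z)$ share an open lattice face that the walk cannot enter, so $X$ has identical windings around the two points. Everything else is routine bookkeeping of crossings of short segments and applying standard 2D SRW Green function estimates.
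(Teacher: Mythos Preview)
Your argument for part~(i) is essentially the paper's: both use a reflection symmetry of the square-lattice walk across the diagonal through $\hat z$ to make the signs $w(e)$ conditionally i.i.d.\ uniform on $\{\pm 1/2\}$, then apply Chebyshev and bound $\E[\#\smex]$. The paper factors this last bound as $\pp[X\text{ hits }B(z,2\epsilon_n)]\le\phi(z)/\ln n$ times a conditional bound $\E[\#\smex\mid\tau\le 1]\le c(\ln n)^2$, while you go directly through the two-dimensional Green function; the two routes give the same $\phi(z)\ln n$.

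For part~(ii) there is a genuine gap. The set $\mdex=\excu\setminus(\smex\cup\lgex)$ is \emph{not} a function of $X$ alone: membership in $\lgex$ is the condition $e\subset\kT$, and $\kT$ is constructed from the coupled Brownian motion $\beta$. Your reflection is a symmetry of the marginal law of $X$, not of the joint law of $(X,\beta)$ under the strong-approximation coupling, so once you condition on anything that sees $\beta$ (such as which excursions are large) the signs $w(e)$ need no longer be centred. Your proposed fix---pass to an $X$-measurable proxy $P\supset\mdex$ and apply Chebyshev to $\sum_{e\in P}w(e)$---does not close the gap: even granting $\mdex\subset P$ on $\kU$, there is no inequality between $\bigl|\sum_{e\in\mdex}w(e)\bigr|$ and $\bigl|\sum_{e\in P}w(e)\bigr|$, since a sub-sum of independent $\pm 1/2$ variables can be much larger in absolute value than the full sum. (Two further slips: on $\kU$ a medium excursion is only guaranteed to enter $B(z,5\epsilon_n)$, not $B(z,3\epsilon_n)$; and such an excursion may start inside that ball and never cross its boundary, so ``visits near $\partial B(z,3\epsilon_n)$'' does not dominate $|\mdex|$.)

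The paper avoids this entirely by not invoking the sign symmetry for~(ii). It bounds the cardinality $\#\mdex$ directly: excursions in $\mdex$ starting outside $B(z,8\epsilon_n)$ each force $\beta$ to cross a fixed annulus, and excursions starting inside $B(z,8\epsilon_n)$ each have probability bounded away from zero of exiting $B(z,11\epsilon_n)$; in both cases an annulus-crossing estimate for $\beta$ (Lemma~\ref{lem:crossings}, via the skew-product representation) gives an exponential tail, hence $\pp[\#\mdex\ge ck]\le\phi(z)\ln n/k^2$. The trivial bound $\bigl|\sum_{e\in\mdex}w(e)\bigr|\le\#\mdex/2$ then yields~(ii).
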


\begin{lem}
\label{lem:large}
\ \\
\begin{enumerate}
\item[(i)] \label{lem:large:>C} There is a function $\phi \in L^1$, such
  that for any $\epsilon>0$, all sufficiently large $n$ and $k\ge (\log n)^{1+\epsilon}$,
  $$\pp\biggl[\biggl|\sum_{e\in \lgex}w(e)\biggr|\ge k\biggr]\le \frac{\ln n}{k^2}\phi(z)\quad \hbox{for all }z\in D.$$
\item[(ii)] There exists a function $\phi \in L^1$, such that for all
  sufficiently large $n$ and all $k\ge 1$,
  $$\pp\biggl[\biggl|\sum_{e\in \lgex}w(e)\biggr|\ge k\biggr]\le \frac{\phi(z)}{k}\quad \hbox{for all }z\in D.$$ 
\end{enumerate}
\end{lem}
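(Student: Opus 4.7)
The plan is to compare $2\pi\sum_{e\in\lgex}w(e)$ to the Brownian angular change $\anglechange{z}{\kT}{\beta}$, and then use the skew-product representation of planar Brownian motion to reduce everything to estimates on $Z_{2\epsilon_n}$ from Lemma~\ref{lem:prelim}. The strategy differs from Lemma~\ref{lem:smallmed} in that the relevant quantity is now a ``faraway'' winding, which is small-variance and captured by the Brownian approximation.

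First I would work on the event $\kU$, on which $|X_t-z|\ge\epsilon_n$ for $t\in\kT$, and the straight-line homotopy between $X$ and $\beta$ over any interval $[\tau_i,\sigma_{i+1})\subset\kT$ avoids $z$. Using additivity of the angular change, the identity $2\pi w(e)=\anglechange{z}{e}{X}$ for each excursion $e$, and the observation that the only excursions intersecting $\kT$ but not contained in it must straddle a boundary point of some $[\tau_i,\sigma_{i+1})$, I would obtain
$$\Bigl|2\pi\sum_{e\in\lgex}w(e)-\anglechange{z}{\kT}{\beta}\Bigr|\le C\cdot N(z),$$
where $N(z)$ is the number of indices $i$ with $[\tau_i,\sigma_{i+1})\cap[0,1]\ne\emptyset$. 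The first term of the error, comparing $X$ to $\beta$ at endpoints of $\kT$-intervals, is a bounded angular perturbation per interval because $|\beta-z|\ge 2\epsilon_n$ there; the second, the ``straddling'' medium excursion correction, is also bounded by a constant per $\kT$-interval. The quantity $N(z)$ counts downcrossings of the annulus $B(z,4\epsilon_n)\setminus B(z,2\epsilon_n)$ by $\beta$, and by Lemma~\ref{lem:prelim}(ii) one has $\E[N(z)]\le\phi(z)/|\ln\epsilon_n|$ with $\phi\in L^1$, since each crossing requires $\beta$ to enter $B(z,2\epsilon_n)$.

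Next, the skew-product representation $\beta_t-z=\exp(\rho^z_{A^z_t}+i\theta^z_{A^z_t})$ identifies $\anglechange{z}{\kT}{\beta}$ as a sum of increments of the one-dimensional Brownian motion $\theta^z$ over the disjoint intervals $[A^z_{\tau_i},A^z_{\sigma_{i+1}}]$. Since $\theta^z$ is independent of the radial process and hence of the random intervals, the conditional law is centered Gaussian with variance $V=\int_\kT|\beta_s-z|^{-2}\,ds\le Z_{2\epsilon_n}^2$. For part (i), I would combine Lemma~\ref{lem:prelim}(iii) with the rapid decay of Lemma~\ref{lem:prelim}(i) beyond scale $|\ln\epsilon_n|\asymp\ln n$ to obtain $\E[Z_{2\epsilon_n}^2]\le C\phi(z)\ln n$ (break the integral $\int_0^\infty 2t\,\pp[Z_{2\epsilon_n}>t]\,dt$ at $t=c\ln n$ and use (iii) below, (i) above). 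Chebyshev then yields $\pp[|\anglechange{z}{\kT}{\beta}|\ge 2\pi k]\le C\phi(z)\ln n/k^2$. The error $N(z)$ is absorbed into this bound for $k\ge(\log n)^{1+\epsilon}$, since $\pp[N(z)\ge k]$ decays superpolynomially in $k/\ln n$ via (i), hence is $\ll\phi(z)\ln n/k^2$ on that range.

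For part (ii), I would avoid Chebyshev (which loses a factor $\ln n$) and instead bound $|\anglechange{z}{\kT}{\beta}|$ by $\sup_{s\le Z_{2\epsilon_n}^2}|B_s|$ for an independent standard Brownian motion $B$. The reflection principle gives $\pp[|\anglechange{z}{\kT}{\beta}|\ge k\mid Z_{2\epsilon_n}]\le 4\exp(-k^2/(2Z_{2\epsilon_n}^2))$, so
$$\pp[|\anglechange{z}{\kT}{\beta}|\ge k]\le 4\,\E\bigl[e^{-k^2/(2Z_{2\epsilon_n}^2)}\bigr]=4\int_0^\infty\frac{k^2}{t^3}e^{-k^2/(2t^2)}\pp[Z_{2\epsilon_n}>t]\,dt,$$
after integration by parts. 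Substituting $\pp[Z_{2\epsilon_n}>t]\le\min(1,\phi(z)/t)$ from Lemma~\ref{lem:prelim}(iii) and changing variables $u=k/t$ yields $C\phi(z)/k$. The contribution of $\kU^c$ is $O(n^{-4})$, absorbed in $\phi(z)/k$ for all $k\ge 1$. The main obstacle is the bookkeeping of Step~1: producing an $L^1$ function of $z$ controlling the boundary errors uniformly in $n$, which requires combining Lemma~\ref{lem:prelim}(ii) with the observation that $N(z)$'s tail is essentially the tail of the number of annulus crossings, and that each such crossing forces $\beta$ to enter $B(z,2\epsilon_n)$, an event whose probability is $O(\phi(z)/\ln n)$.
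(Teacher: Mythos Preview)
Your overall architecture matches the paper's: compare $\sum_{e\in\lgex}w(e)$ with $\anglechange{z}{\kT}{\beta}/(2\pi)$ up to an error controlled by $N(z)$, the number of $\kT$-intervals, and then control the Brownian angular change via the representation $\anglechange{z}{\kT}{\beta}\stackrel{d}{=}\gamma\widetilde Z$ with $\widetilde Z\le Z_{2\epsilon_n}$. Your part~(ii) is essentially the paper's argument written in time-change rather than $\gamma Z$ notation: the paper integrates $\pp[Z_{\epsilon_n}\ge k/x]\le\phi(z)x/k$ against the Gaussian density and gets $C\phi(z)/k$ directly.

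Two points need repair. First, your claim $\E[N(z)]\le\phi(z)/|\ln\epsilon_n|$ is off by a factor $\ln n$: Lemma~\ref{lem:prelim}(ii) only bounds $\pp[N(z)\ge 1]$, and once $\beta$ has entered $B(z,2\epsilon_n)$ the expected number of crossings of the annulus before escaping a large ball is of order $\ln n$ (this is the gambler's-ruin computation behind Lemma~\ref{lem:crossings}), so in fact $\E[N(z)]\le C\phi(z)$. What you actually need is the tail bound $\pp[N(z)\ge k]\le\phi(z)\ln n/k^2+g(n)$, which is exactly Lemma~\ref{lem:crossings}, not Lemma~\ref{lem:prelim}(i) or (ii).

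Second, and more substantively, your Chebyshev route to part~(i) needs $\E[Z_{2\epsilon_n}^2]\le C\phi(z)\ln n$, but the justification you sketch does not deliver this. Below $c\ln n$, part~(iii) gives $\int_0^{c\ln n}2t\cdot\phi(z)/t\,dt=2c\phi(z)\ln n$; above $c\ln n$, however, part~(i) is $z$-independent and yields $\int_{c\ln n}^\infty 2t\,g(t/|\ln\epsilon_n|)\,dt\asymp(\ln n)^2$ with no $\phi(z)$ factor. So your split produces $C_1\phi(z)\ln n+C_2(\ln n)^2$, and the second term cannot be bounded by $\phi'(z)\ln n$ for any fixed $\phi'\in L^1$. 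The second-moment bound you want is in fact true---Fubini on the definition of $Z_\epsilon^2$ gives $\E[Z_{2\epsilon_n}^2]\le C|\ln\epsilon_n|\int_0^1 p_s(0,z)\,ds+O(1)$---but that is a separate computation. The paper sidesteps this entirely: instead of Chebyshev it splits $\{|\gamma|Z_{\epsilon_n}\ge k\}\subset\{|\gamma|\ge k^\delta\}\cup\{Z_{\epsilon_n}\ge k^{1-\delta}\}$ with $(1-\delta)(1+\epsilon)>1$, uses part~(i) on the second event to get a $z$-independent bound $\le g''(k)\le k^{-4}$ for $k\ge(\ln n)^{1+\epsilon}$, and then interpolates with the trivial bound $\pp[|\anglechange{z}{\kT}{\beta}|\ge\pi]\le e^{-d|z|^2}$ to recover an $L^1$ function of $z$.

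Finally, the ``$O(n^{-4})$ absorbed into $\phi(z)/k$'' step also needs the observation (as in the paper's treatment of medium excursions) that $|\sum_{e\in\lgex}w(e)|\le n$ and vanishes for $|z|>\sqrt{n/\kappa}$, so that the error term can be dominated by something like $|z|^{-3}$ for large $|z|$.
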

We will prove these lemmas in the next three subsections.
\subsection{Small excursions}

In this subsection, we will prove Lemma \ref{lem:smallmed} (i) by using
results on the occupation times of points by a random walk on a
lattice.

We first show that the symmetry of the random walk implies that
excursions are equally likely to go clockwise or counterclockwise.
For the simple random walk, this follows from the fact that reflecting
an excursion across the line $\Delta^+_z\cup \Delta^-_z$ results in an
equally probable excursion which winds in the opposite direction, but
there is also an argument which relies only on the random walk being
symmetric.  Let $[e_i,e_{i+1}]$ be an excursion from $\Delta^+_z$ to
$\Delta^-_z$ and let
$$t_0=\max\{t\in [e_i,e_{i+1}] \mid X_t\in \Delta^+_z\}.$$
The portion of the excursion between $t_0$ and $e_{i+1}$ travels
between $p_1=X_{t_0}$ and $p_2=X_{e_{i+1}}$.  Its time reversal
travels from $p_2$ to $p_1$.  Rotating this curve by 180 degrees and
translating gives a curve going from $p_1$ to $p_2$ which winds around
$z$ in the opposite direction of the original.  Replacing the section
of the excursion between $t_0$ and $e_{i+1}$ by this rotated curve is
a measure-preserving map on the space of random walk paths.  This map
doesn't change $\excu$ or $\smex$ and changes the direction of the
excursion from clockwise to counterclockwise or vice versa, so the
variables $w([e_i,e_{i+1}])$ are independent and take values from $\{-1/2,1/2\}$
with equal probability.  Furthermore,
$$\E\biggl[\Bigl(\sum_{e\in\smex}w(e)\Bigr)^2\biggr]=\frac{1}{4}\E[\#\smex].$$


Many points have no small excursions; if the random walk never comes
close to $z$, then $\smex=\emptyset$.  Let $\tau=\inf \{t\ge 0 \mid
X_t\in B(z,2 \epsilon_n)\}$.  Then by Corollary
\ref{cor:RWnearness},
$$\pp[\tau \le  1] \le \frac{\phi(z)}{\ln n},$$
for some $\phi \in L^1$.  Applying the Markov property, this gives
$$\pp\biggl[\biggl|\sum_{e\in\smex}w(e)\biggr|\ge k\biggr] \le \frac{1}{4} \frac{\E[\#\smex]}{k^2}\frac{\phi(z)}{\ln n},$$
where the expectation on the right hand side is taken conditionally on
the past before time $\tau$.  Since each small excursion
starts at a point in $(\Delta_z^+ \cup \Delta_z^-)\cap
B(z,\epsilon_n)$, the number $\#\smex$ of small excursions is bounded
by the number of visits to such points; it is well known that the mean
number of visits to each site is bounded by $c \ln n$ for some
constant $c$.  Since the number of lattice points in $(\Delta_z^+ \cup \Delta_z^-)\cap
B(z,\epsilon_n)$ is of order $\ln n$, we get 
$$\E[\#\smex]\le c (\ln n)^2,$$
for some constant $c>0$, which
establishes Lemma \ref{lem:smallmed} (i).

\subsection{Medium excursions}

On the set $\kU$, medium excursions start outside of
$B(z,\epsilon_n)$ and enter $B(z,5\epsilon_n)$ at some point.  We
will divide the medium excursions into two classes, depending on
whether they start inside or outside the ball $B(z,8\epsilon_n)$, and
bound the number in each class.

Let 
$$\mdex'=\mdex'(z):=\{[t_1,t_2] \in \mdex \mid |X_{t_1}-z|>8\epsilon_n\},$$
be
the set of medium excursions starting outside $B(z,8\epsilon_n)$ and 
$$\mdex''=\mdex''(z):=\{[t_1,t_2] \in \mdex \mid |X_{t_1}-z|\le 8\epsilon_n\},$$
be its complement.
Given $0<a<b$, define the sequences of stopping times
\begin{align*}
\tau^{a,b}_0&:=0, \\
\sigma^{a,b}_i& = \sigma^{a,b}_i(z):=\inf \{t\ge \tau^{a,b}_{i-1}\mid |\beta_t-z|<a\epsilon_n\}\quad \forall i\ge 1,\\
\tau^{a,b}_i&=\tau^{a,b}_i(z):=\inf\{t\ge \sigma^{a,b}_i \mid |\beta_t-z|>b\epsilon_n\}\quad \forall i\ge 1.
\end{align*}
These stopping times record the number of times 
the Brownian motion crosses the annulus $B(z,b\epsilon_n)\setminus
B(z,a\epsilon_n)$.  Using the skew-product decomposition, we can bound
the number of such crossings that occur before time 1:

\begin{lem} 
  \label{lem:crossings} 
  For any $0<a<b$, there are a constant $c>0$ and functions $\phi\in L^1$ and $g\in \kG$,
  such that for sufficiently large $n$ and all $z\in \C$,
  $$\pp[\sigma^{a,b}_i<1]\le \frac{\phi(z)}{\ln n}\left(1-\frac{c}{\ln n}\right)^i+g(n)\le \frac{\phi(z)}{\ln n}e^{-ci/\ln n}+g(n).$$
\end{lem}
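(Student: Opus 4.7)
The plan is to combine a base-case estimate from Lemma~\ref{lem:prelim}(ii) with geometric decay via the strong Markov property. The base case is immediate: $\sigma_1^{a,b}=t_{a\epsilon_n}(z)$, so Lemma~\ref{lem:prelim}(ii) gives $\pp[\sigma_1^{a,b}<1]\le\phi(z)/|\ln(a\epsilon_n)|\le 3\phi(z)/\ln n$ for sufficiently large $n$, since $|\ln(a\epsilon_n)|\sim(1/2)\ln n$. The core of the argument is the uniform estimate
\begin{equation*}
\sup_{w\in\partial B(z,b\epsilon_n)}\pp_w\bigl[t_{a\epsilon_n}(z)\le 1\bigr]\le 1-\frac{c}{\ln n}
\end{equation*}
for some $c=c(a,b)>0$, valid for all large $n$ and uniformly in $z\in\C$.

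Granting this, the strong Markov property applied at $\tau_{i-1}^{a,b}$ (noting that on $\{\tau_{i-1}^{a,b}<1\}$ the post-$\tau_{i-1}^{a,b}$ Brownian motion starts on $\partial B(z,b\epsilon_n)$ with remaining time at most $1$) yields $\pp[\sigma_i^{a,b}<1]\le(1-c/\ln n)\,\pp[\sigma_{i-1}^{a,b}<1]$. Iterating together with the base case gives $\pp[\sigma_i^{a,b}<1]\le(1-c/\ln n)^{i-1}\cdot 3\phi(z)/\ln n$; absorbing the factor $(1-c/\ln n)^{-1}$ into a slightly larger $\phi\in L^1$ produces the first inequality of the lemma, and the second follows from the elementary bound $(1-c/\ln n)^i\le e^{-ci/\ln n}$. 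The term $g(n)\in\kG$ acts as a safety margin for small error contributions (for instance, to cover very large $|z|$, where both sides of the inequality are anyway negligible).

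To prove the uniform estimate, I decompose the event $\{t_{a\epsilon_n}(z)\le 1\}$ according to whether the Brownian motion first exits the fixed-radius ball $B(z,2)$, which contains $B(z,b\epsilon_n)$ for large $n$. The classical two-disk harmonic-measure calculation gives
\begin{equation*}
\pp_w\bigl[\text{hit }B(z,a\epsilon_n)\text{ before exiting }B(z,2)\bigr]=\frac{\ln(2/(b\epsilon_n))}{\ln(2/(a\epsilon_n))}\le 1-\frac{2\ln(b/a)}{\ln n}
\end{equation*}
for sufficiently large $n$. On the complementary event (exiting $B(z,2)$ first), applying the strong Markov property at the exit time and Lemma~\ref{lem:prelim}(ii) (after translation so that the Brownian motion starts at $0$ and the target disk is centered at distance $2$, a regime where the $\phi$ coming from Spitzer's asymptotic is uniformly bounded) bounds the conditional probability of a subsequent hit within the remaining time by $O(1/\ln n)$. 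Summing, the total hitting probability is at most $1-2\ln(b/a)/\ln n+O(1/\ln^2 n)\le 1-c/\ln n$, as required.

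The main obstacle is precisely this uniform upper bound. Lemma~\ref{lem:prelim}(ii) alone is insufficient because its bound $\phi(w-z)/|\ln\epsilon|$ can approach $1$ when the starting point $w$ is near the target disk; one really needs the two-disk split to separate the near contribution (controlled by explicit harmonic measure on the annulus $B(z,2)\setminus B(z,a\epsilon_n)$) from the far contribution (controlled via Lemma~\ref{lem:prelim}(ii) after a macroscopic excursion to $\partial B(z,2)$).
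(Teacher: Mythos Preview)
Your proof is correct and in fact yields the bound without the additive $g(n)$ term. The paper takes a slightly different route: rather than bounding the time-$1$ hitting probability directly, it replaces the horizon $1$ by the exit time $T_n(z)$ of the large ball $B(z,n)$. On that substitute horizon the Markov iteration becomes an exact equality---by harmonic measure on the annulus $B(z,n)\setminus B(z,a\epsilon_n)$, a Brownian motion started on $\partial B(z,b\epsilon_n)$ hits $B(z,a\epsilon_n)$ before $\partial B(z,n)$ with probability exactly $p=\ln(n/(b\epsilon_n))/\ln(n/(a\epsilon_n))\le 1-c/\ln n$---and the $g(n)$ term then absorbs the event $\{T_n(z)<1\}$. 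Your argument instead keeps the horizon at time~$1$ and obtains the decay factor via a two-scale split at the fixed radius~$2$: harmonic measure on $B(z,2)\setminus B(z,a\epsilon_n)$ supplies the main factor $1-2\ln(b/a)/\ln n$, and a second appeal to Lemma~\ref{lem:prelim}(ii) (applied from a point at distance~$2$ from the target, where the bounding function $\phi$ is uniformly bounded) controls the $O((\ln n)^{-2})$ contribution from paths that first escape $B(z,2)$. Both routes are short; the paper's is a single harmonic-measure computation at the price of the auxiliary horizon $T_n(z)$ and the resulting $g(n)$ correction, while yours avoids both at the price of the extra use of Lemma~\ref{lem:prelim}(ii).
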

\begin{proof}
  Recall the definitions of $T_r(z)$ and $t_r(z)$ from \eqref{Trtr}. 
  By the skew-product decomposition, there is a constant
  $c>0$ (depending on $a$ and $b$) such that a Brownian motion starting on the circle of radius
  $b\epsilon_n$ around $z$ has probability $p\le 1-c/\ln n$ of
  entering $B(z,a\epsilon_n)$ before escaping $B(z,n)$.  Thus, if
  $\sigma^{a,b}_i<T_n(z)$, then a.s.  $\tau^{a,b}_i< T_n(z)$ and 
  $$\pp[\sigma^{a,b}_{i+1}<T_n(z)\mid \sigma^{a,b}_{i}<T_n(z)]= p.$$
  Thus
  $$\pp[\sigma^{a,b}_{i}<T_n(z)\mid \sigma^{a,b}_1<T_n(z)]= p^i.$$
  Furthermore it is well known that
  $$\pp[T_n(z)<1]\le g(n),$$
for some $g\in \kG$. 
 Now $\sigma^{a,b}_1=t_{a\epsilon_n}(z)$ and by Lemma \ref{lem:prelim} (ii), there is a $\phi\in L^1$ such that for all sufficiently large $n$,
  $$\pp[\sigma^{a,b}_1<1]= \pp[t_{a\epsilon_n}(z)<1]\le \frac{\phi(z)}{\ln n}.$$
  We conclude that
  $$\pp[\sigma^{a,b}_{i}<1]\le \pp[\sigma^{a,b}_{i}<T_n(z),\sigma^{a,b}_1<1]+g(n)\le \frac{\phi(z)}{\ln n}p^i+g(n),$$
  as desired.  
\end{proof}

On $\kU$, the Brownian motion crosses the annulus
$B(z,7\epsilon_n)\setminus B(z,6\epsilon_n)$ during each excursion in
$\mdex'$.  Thus, there are $\phi,\phi'\in L^1$ such that for all sufficiently large $n$,
\begin{align}
\label{kkkk}
\pp[\#\mdex'>k]& \le \pp[\sigma^{6,7}_{k}<1]+\pp[\kU^c] \\
 \nonumber              & \le \frac{\phi(z)}{\ln n} e^{-ck/\ln n}+O(n^{-4}) \\
 \nonumber              & \le \phi'(z)\frac{\ln n}{k^2}+O(n^{-4}).
\end{align}
for all $k$.  

To bound the cardinality of $\mdex''$, we will show that if $\mdex''$ is
large, then the rescaled random walk likely crosses the annulus
$$B(z,11\epsilon_n)\setminus B(z,8\epsilon_n)$$ many times before time
$1$.  First note that, on $\kU$, 
$$\mdex''\subset \bar{\kE} := \{[t_1,t_2] \in \excu \mid \epsilon_n\le |X_{t_1}-z|\le 8\epsilon_n\}.$$
Using strong approximation and the skew-product decomposition, one can
show that the probability that a random walk started in
$$(B(z,8\epsilon_n)\setminus B(z,\epsilon_n))\cap \Delta^+_z$$
leaves $B(z,11\epsilon_n)$ before hitting $\Delta^-_z$ is bounded away
from zero, say by $p>0$.  Let $\eta$ be the number of times the random
walk crosses from inside $B(z,8\epsilon_n)$ to outside
$B(z,11\epsilon_n)$.  Let $\{x_i\}$ be a sequence of independent random variables with $\pp[x_i=1]=p$,
$\pp[x_i=0]=1-p$.  Since the starting times of excursions in
$\bar{\kE}$ form a sequence of stopping times,
$$\pp[\#\bar{\kE}\ge 2k/p,\eta \le k]\le \pp\biggl[\sum_{i=1}^{2k/p} x_i \le k\biggr] \le e^{-c k},$$
for some $c>0$. So by reproducing the steps of \eqref{kkkk}, we get
\begin{align*}
  \pp[\#\bar{\kE}\ge 2k/p] 
   \le \phi(z)\frac{\ln n}{k^2}+ O(n^{-4}),
\end{align*}
for some $\phi\in L^1$.

Combining the bounds on $\#\mdex'$ and
$\#\mdex''$, we see that there is some $\phi\in L^1$ such that
$$\pp\biggl[\biggl|\sum_{e\in \mdex} w(e)\biggr|\ge \left(1+\frac{2}{p}\right)k\biggr] \le \pp\left[\#\mdex\ge \left(1+\frac{2}{p}\right)k\right] \le \phi(z)\frac{\ln n}{k^2}+O(n^{-4}).$$
We can eliminate the $O(n^{-4})$ term using an argument like the one in the proof of
Corollary \ref{cor:RWnearness}.  That is, the number of medium
excursions is at most $n$, and if $|z|> \sqrt{n/\kappa}$, then the random walk
does not come near $z$. So
\begin{align*}
\pp\biggl[\biggl|\sum_{e\in \mdex} w(e)\biggr|\ge
\left(1+\frac{2}{p}\right)k\biggr]&\le \phi(z)\frac{\ln n}{k^2}+c n^{-4}1_{\{|z|\le
\sqrt{n/\kappa}\}} 1_{\{(1+2/p)k\le n\}} \\
&\le \phi'(z)\frac{\ln n}{k^2},
\end{align*}
for some $c>0$ and $\phi'\in L^1$,
as desired.  This proves Lemma \ref{lem:smallmed} (ii).

\subsection{Large excursions}\label{sec:largeexc}
In this subsection, we will prove Lemma \ref{lem:large} using strong approximation.

Note first that on the set $\kU$, the winding of the Brownian
motion during $\kT$ is approximated by the winding number of the large
excursions.  The difference between the two arises from excursions which intersect the beginning or end of an interval of $\kT$; each such interval can thus increase the difference by at most $1$.  Thus on the set $\kU$,
$$\biggl|\sum_{e\in\lgex}w(e)\biggr| \le \left|\frac{\anglechange{z}{\kT}{\beta}}{2\pi}\right|+\#\{i\mid \tau_i \le 1\}+1,$$
and
\begin{equation}\label{eq:lgexcursionpbd}
\pp\biggl[\biggl|\sum_{e\in\lgex}w(e)\biggr|>2k +1\biggr] \le \pp\left[\left|\frac{\anglechange{z}{\kT}{\beta}}{2\pi}\right|\ge k\right]+\pp\left[\#\{i\mid \tau_i \le 1\}>k\right]+\pp[\kU^c].
\end{equation}
By Lemma \ref{lem:crossings}, we have
\begin{equation} 
\label{eqtaui}
\pp[\#\{i\mid \tau_i \le 1\}>k]\le \frac{\ln n}{k^2}\phi(z)+g(n),
\end{equation}
for some $\phi \in L^1$ and $g\in \kG$.  To prove Lemma \ref{lem:large}, it thus suffices to bound 
$$\pp\left[\left|\frac{\anglechange{z}{\kT}{\beta}}{2\pi}\right|\ge k\right].$$

Let 
$$\widetilde{Z} := \left(\int_{\kT}\frac{ds}{|\beta_s-z|^2}\right)^{1/2}.$$
Since $\kT$ is a finite union of intervals whose endpoints are
stopping times for the Brownian motion, $\anglechange{z}{\kT\cap[0,t]}{\beta}$
is a continuous local martingale whose quadratic variation at time $1$ is $\widetilde{Z}^2$. Thus $\anglechange{z}{\kT}{\beta}$ is equal in law to $\gamma \widetilde{Z}$, with $\gamma$ a standard
normal variable.  If $Z_{\epsilon_n}$ is given by \eqref{eq:zepsilon},
then $\widetilde{Z} \le Z_{\epsilon_n}$, and we get
$$\pp\left[|\anglechange{z}{\kT}{\beta}|\ge k\right] \le  \pp\left[|\gamma| Z_{\epsilon_n} \ge k\right].$$

We first show part (i) of the lemma.  Let $\delta>0$ be such that $(1-\delta)(1+\epsilon)>1.$
Then by Lemma \ref{lem:prelim} (i), for $n$ sufficiently large depending on $\epsilon$ and $k\ge (\ln n)^{1+\epsilon}$, there
are $c>0$ , $c'>0$, $\epsilon'>0$ and $g'$, $g''\in \mathcal{G}$ such that
\begin{align*}
\pp\left[|\gamma| Z_{\epsilon_n} \ge k\right] &\le \pp\left[|\gamma| \ge k^{\delta} \right]+ \pp\left[Z_{\epsilon_n} \ge k^{1-\delta}\right]\\
&\le c' e^{-k^{2\delta}/2} + g(k^{1-\delta}/|\ln \epsilon_n|)\\
&\le c' e^{-k^{2\delta}/2} + g'(c k^{\epsilon'})\le g''(k).
\end{align*}
Thus for sufficiently large $n$ and $k\ge (\ln n)^{1+\epsilon}$,
\begin{align*}
\pp[|\anglechange{z}{\kT}{\beta}|\ge k]\le k^{-4}.
\end{align*}
Moreover, if
$|\anglechange{z}{\kT}{\beta}|\ge \pi$, then we must have
$T_{|z|/2}(0)\le 1$; this has probability less than $e^{-d|z|^2}$ for
some constant $d>0$. By interpolation this gives
\begin{equation} 
\label{eqwzT}
\pp\left[|\anglechange{z}{\kT}{\beta}| \ge k\right]\le k^{-2}e^{-d|z|^2/2}.
\end{equation}
Thus by \eqref{eq:lgexcursionpbd}, \eqref{eqtaui}, and \eqref{eqwzT} there is a $\phi\in L^1$, independent of $\epsilon$, such that
for all sufficiently large $n$ and $k\ge (\ln n)^{1+\epsilon}$,
$$\pp\biggl[\biggl|\sum_{e\in\lgex}w(e)\biggr|\ge k\biggr] \le \frac{\ln n}{k^2}\phi(z)+O(n^{-4}).$$
As in the previous subsection, we can
eliminate the $O(n^{-4})$ term by changing $\phi.$ This proves part
(i) of Lemma \ref{lem:large}.

For part (ii), we use Lemma \ref{lem:prelim} (iii).
If $n$ is sufficiently large, we have
\begin{align*}
\pp\left[|\gamma| Z_{\epsilon_n} \ge k\right] &=\int_0^\infty \frac{2}{\sqrt{2\pi}} e^{-x^2/2} \pp\left[Z_{\epsilon_n} \ge \frac{k}{x}\right]\;dx\\
& \le \int_0^\infty \frac{2}{\sqrt{2\pi}} e^{-x^2/2} \frac{\phi(z) x}{k}\;dx=\frac{2}{\sqrt{2\pi}}\frac{\phi(z)}{k},
\end{align*}
for some $\phi\in L^1$.  Thus
$$\pp\biggl[\biggl|\sum_{e\in\lgex}w(e)\biggr|>k\biggr] \le \frac{\phi'(z)}{k}+O(n^{-4}),$$
for some $\phi'\in L^1$, and as above, the $O(n^{-4})$ term can be absorbed into $\phi'$.

\section{Application to the study of Dehn functions} 
\label{sec:apps}
One motivation for studying the winding numbers of random walks comes
from geometric group theory, namely the study of Dehn functions.  For
an introduction to Dehn functions with rigorous definitions, see
\cite{Bri}; we will only sketch the definitions.  Given a closed curve
$\gamma$ in a space $X$, one can ask for the infimal area of a disc
with boundary $\gamma$.  We call this the filling area of
$\gamma$, denoted $\delta(\gamma)$, and define a function $\delta$ so
that $\delta(n)$ is the supremal filling area of curves in $X$ with
length at most $n$; this is the Dehn function of $X$.

Gromov noted that when a group $G$ acts properly discontinuously and
cocompactly on a connected, simply connected space $X$, the filling
area of curves can be described in terms of $G$ \cite{Gr}.  Roughly,
formal products of generators of $G$ (words) correspond to curves in
$X$ and conversely, curves in $X$ can be approximated by words.  For
example, if $X$ is the universal cover of a compact space $M$ and
$G=\pi_1(M)$, generators of $G$ correspond to closed curves in $M$.  A
formal product $w$ of generators corresponds to a concatenation of
these curves, and if $w$ represents the identity in $G$, this
concatenation lifts to a closed curve in $X$.  A disc whose boundary
is this curve corresponds to a way of using the relators of $G$ to
show that $w$ represents the identity.  We define the Dehn function of
a group similarly to that of a space; if $w$ is a word, let $\bar{w}$
be the element of $G$ represented by $w$, and if $\bar{w}$ is the
identity, let the {\em filling area} $\delta(w)$ of $w$ be the minimal
number of applications of relators necessary to reduce $w$ to the
identity.  Define the {\em Dehn function} $\delta$ of $G$ so that
$\delta(n)$ is the maximal filling area of words with length at most
$n$.  The Dehn function of a group depends {\em a priori} on the
presentation of the group, but one can show that the growth rate of
$\delta$ is a group invariant and that the Dehn function of $G$ grows
at the same rate as that of $X$.

The Dehn function measures maximal filling area; Gromov \cite[5.A$'_6$]{Gr} also
proposed studying the distribution of $\delta(w)$ when $w$ is a random
word of length $n$.  This has led to multiple versions of an averaged
Dehn function or mean Dehn function; see \cite{BogVen} for some
alternatives to our definition.  We will give  definitions based on
random walks.  Given a random walk on $G$ supported on the generators,
paths of the random walk correspond to words; in particular, the
random walk induces a measure on the set of words of length $n$.  We
can thus define the {\em averaged Dehn function} $\delta_{\text{avg}}(n)$ to
be the expectation of $\delta(w)$ with respect to this measure,
conditioned on the event that $w$ represents the identity in $G$.  If
the random walk has a nonzero probability of not moving at each step,
this is defined for all $n$.  The averaged Dehn function is not known
to be a group invariant and it is possible that its growth depends on
the transition probabilities of the random walk.

The averaged Dehn function often behaves differently than the Dehn
function.  For example, in $\Z^2$, the Dehn function is quadratic,
corresponding to the fact that a curve of length $n$ in $\R^2$
encloses an area at most quadratic in $n$.  On the other hand, random
walks in $\Z^2$ enclose much less area, and the averaged Dehn function
is at most $O(n\ln n)$ for a wide variety of random walks.  A similar
phenomenon occurs in nilpotent groups; if $\delta(n)=O(n^k)$ for some
$k>2$, then $\delta_{\text{avg}}(n)=O(n^{k/2})$ for many random walks
\cite{Yo}.

Since the averaged Dehn function involves bridges of random walks on
groups, it is often difficult to work with.  M.\ Sapir has proposed an
alternative, the {\em random Dehn function} \cite{BogVen}, which depends on a
choice of a random walk and a choice of a word $v_x$ for every
element $x\in G$ so that $v_x$ represents $x^{-1}$.  If $w$ is a
word, $w v_{\bar{w}}$ is a word representing the identity; essentially,
the $v_x$ give a way to close up any path.  One can then define
$$\delta_{\text{rnd}}(n)=\E[\delta(w v_{\bar{w}})]$$
where $w$ is chosen from the set of words of length $n$ with measure
corresponding to the random walk.  This function depends on the choice
of $v_x$, but if the $v_x$ are short enough, the choice may not affect
the function.  In particular, Proposition \ref{prop:low index} implies the following:
\begin{prop}
\label{propdelta}
  For every $x\in \Z^d$, $d\ge 2$, let $v_x$ be a word of minimal length
  representing $x^{-1}$.  Then there is a $c>0$ such that the random
  Dehn function of $\Z^d$ with respect to the $v_x$ and any random
  walk on $\Z^d$ with i.i.d.\ bounded increments,
  $\E[S_{i}-S_{i-1}]=0$, and whose support generates $\Z^d$ satisfies
  $$\delta_{\text{rnd}}(n)\ge c n \ln \ln n$$
  for sufficiently large $n$.
\end{prop}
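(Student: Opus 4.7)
The plan is to project the loop traced by $wv_{\bar w}$ onto a coordinate $2$-plane and to bound the Dehn function below by the total winding number of the projection. Let $\pi:\Z^d\to\Z^2$ denote projection onto the first two coordinates, and for a closed word $u$ in $\Z^d$ let $\gamma_u$ denote the piecewise-linear loop in $\R^d$ traced by $u$. First I would establish the \emph{projection bound}
$$\delta(u)\ \ge\ \int_{\R^2}|i_{\pi(\gamma_u)}(z)|\,dz,$$
as follows: any van Kampen diagram for $u$ in the standard commutator presentation of $\Z^d$, consisting of $N=\delta(u)$ square $2$-cells labeled by commutators $[e_i,e_j]$, maps cellularly to $T^2=K(\Z^2,1)$ (collapsing the generators $e_i$ with $i\ge 3$) and hence lifts to a cellular filling of $\pi(\gamma_u)$ in $\R^2$ by at most $N$ unit squares; the algebraic fibre count over any $z\notin\pi(\gamma_u)$ equals $i_{\pi(\gamma_u)}(z)$, and integrating $|i_{\pi(\gamma_u)}(z)|\le\#\{\text{squares covering }z\}$ yields the claim. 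It therefore suffices to lower-bound $\E\int|i_{\pi(\gamma_{wv_{\bar w}})}|\,dz$.

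Next, the projected walk $S':=\pi(S_\bullet)$ is itself a planar walk with bounded i.i.d.\ mean-zero increments; since the support of $S_\bullet$ generates $\Z^d$ and so spans $\R^d$, the support of $S'$ spans $\R^2$, and $\cov(S'_1)$ is positive definite. After an invertible linear change of coordinates $A$ of $\R^2$ normalizing the covariance to $\kappa I$ (which only multiplies the winding integral by the positive constant $|\det A|^{-1}$), Proposition~\ref{prop:low index} applies directly to $S'$. Let $\gamma_n$ denote the planar loop joining $S_0'=0,S_1',\dots,S_n'$ in order by straight segments and closed by the straight segment from $S_n'$ back to $0$. Proposition~\ref{prop:low index} gives a positive pointwise liminf of $\E[|j_n(z)|]/\ln\ln n$ on $D$, dominated by an $L^1$ function, so Fatou's lemma combined with the standard change of variables from the rescaled walk $X_t=S'_{\lfloor nt\rfloor}/\sqrt{\kappa n}$ back to $S'$ yields
$$\liminf_{n\to\infty}\frac{\E\!\left[\int_{\R^2}|i_{\gamma_n}(z)|\,dz\right]}{n\ln\ln n}\ \ge\ c_0\ >\ 0.$$

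Finally, let $\tilde\gamma_n:=\pi(\gamma_{wv_{\bar w}})$ be the planar loop closed by $\pi(v_{\bar S_n})$ rather than by a straight segment, and let $\rho_n$ be the auxiliary loop consisting of that straight segment followed by the reverse of $\pi(v_{\bar S_n})$. Since $i_{\gamma_n}=i_{\tilde\gamma_n}+i_{\rho_n}$ off $\gamma_n\cup\tilde\gamma_n$, we have $\int|i_{\tilde\gamma_n}|\,dz\ge\int|i_{\gamma_n}|\,dz-\int|i_{\rho_n}|\,dz$. The length $|v_{\bar S_n}|$ is comparable to the word-length of $\bar S_n$ and hence to $|S_n|$, and the step size is bounded, so $\rho_n$ has length $L_n=O(|S_n|)$ and is supported in a disc of radius $O(|S_n|)$; combining the Banchoff--Pohl inequality $\int i_{\rho_n}^2\,dz\le L_n^2/(4\pi)$ with Cauchy--Schwarz gives $\int|i_{\rho_n}|\,dz=O(|S_n|^2)$. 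Taking expectations, $\E\int|i_{\rho_n}|=O(\E|S_n|^2)=O(n)=o(n\ln\ln n)$; combined with the projection bound this proves $\E[\delta(wv_{\bar w})]\ge c\,n\ln\ln n$ for large $n$. The hard part of the argument is the discrete-to-continuous passage in the projection bound, i.e.\ turning a combinatorial van Kampen diagram in $\Z^d$ into an actual filling disc in $\R^2$ of at most the same area; once this is set up, the remainder is essentially Proposition~\ref{prop:low index} combined with elementary planar geometry.
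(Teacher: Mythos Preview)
Your proof is correct and follows essentially the same route as the paper: project to a coordinate plane, bound the filling area below by the total winding integral of the projected loop, invoke Proposition~\ref{prop:low index} for the straight-line-closed walk, and absorb the $O(n)$ discrepancy coming from closing with $v_{\bar w}$ instead of a segment. Your treatment is in a couple of places more explicit than the paper's --- you normalize the covariance of the projected increment by a linear change of coordinates before applying Proposition~\ref{prop:low index} (a point the paper glosses over), and you handle the auxiliary loop $\rho_n$ via Banchoff--Pohl plus Cauchy--Schwarz where the paper simply writes down a filling disc of area $\le 4\ell(v_{\bar w})^2$ --- but the architecture is the same.
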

\begin{proof}
  To use Proposition \ref{prop:low index}, we need to use the correspondence
  between words in $\Z^d$ and curves in $\R^d$.  If $w=w_1\dots w_n$
  is a word in some generating set for $\Z^d$, let $\widetilde{w}$ be
  the curve in $\R^d$ connecting
  $$\overline{w_1}, \overline{w_1w_2},\dots,\overline{w_1\dots w_n}\in \Z^d\subset \R^d$$
  by straight lines.

  It suffices to show that 
  $$\E[\delta(\widetilde{w v_{\bar{w}}})]\ge c n\ln\ln n.$$ 
  Winding numbers provide a lower bound on the filling area of curves
  in the plane; if $\gamma:S^1\to \R^2$, and $f:D^2\to \R^2$ is a
  PL or smooth map with boundary $\gamma$, then $\#f^{-1}(x)\ge |i_n(x)|$
  away from a set of measure zero and thus
  $$\area{f}\ge \int_{\R^2} |i_\gamma(x)|\;dx.$$
  Indeed, $\int_{\R^2} |i_\gamma(x)|\;dx$ is the abelianized area of
  $w v_{\bar{w}}$, a function defined in \cite{BaMSh} as a lower bound
    for the filling area.

  As in the 2-dimensional case, we can define a random closed curve
  $\kS$ in $\R^d$ which consists of a random walk and a line
  connecting the endpoints.  Let $p:\R^d\to \R^2$ be the projection onto
  the first two coordinates; since this map is area-decreasing,
  $$\delta(\kS)\ge \int_{\R^2} |i_{p(\kS)}(x)|\;dx.$$

  The curve $\kS$ can be obtained from $\widetilde{w v_{\bar{w}}}$ by
  replacing $\widetilde{v_{\bar{w}}}$ with a straight line.  For every
  $x\in \Z^d$, let $f_x:D^2\to \R^d$ be a disc whose boundary consists
  of $v_x$ and a straight line connecting $0$ and $x$ and such that
  $\area(f_x)\le 4\ell(v_x)^2$.  If $h:D^2\to \R^d$ is a disc with
  boundary $\widetilde{w v_{\bar{w}}}$, we can construct a disc with
  boundary $\kS$ by adjoining $f_{\bar{w}}$ to $h$, and so
  $$\delta(\widetilde{w v_{\bar{w}}})\ge \int_{\R^2} |i_{p(\kS)}(x)|\;dx-4\ell(v_w)^2.$$
  Then Proposition \ref{prop:low index} implies that
  $$\E[\delta(\widetilde{w v_{\bar{w}}})]\ge c n\ln\ln n-4\E[\ell(v_w)^2]\ge \frac{c}{2} n\ln\ln n$$
  for sufficiently large $n$, as desired.
\end{proof}

\section{Extensions to bridges and other random walks}
\label{secext}
We can prove a similar result to Proposition \ref{propdelta} for the averaged Dehn function of $\Z^2$
by extending Proposition \ref{prop:low index} to the random walk
bridge.  To prove Proposition \ref{prop:low index}, it suffices to
have strong approximation of the random walk bridge by a Brownian
bridge, estimates of $\E[\tilde{j}(z)]$, and estimates of the
probability of lying near the loop; all of these results exist in the
literature.  Borisov \cite{Bor} has proven a strong approximation
theorem which applies to the simple random walk bridge in $\Z^2$, and
the analogue of Werner's result for the Brownian loop is a consequence
of Yor \cite{Y}. Note that for Brownian loop, the result is more precise since \cite{Y} gives an exact formula for the expected area of points with index $k\neq 0$. The case $k=0$ has even been computed recently by Garban and Trujillo Ferreras \cite{GaTF}. 
Finally the upper bound on the
probability of being close to the Brownian loop is a consequence of
the fact that the law of the Brownian loop for $t\in[0,1/2]$ or
$t\in[1/2,1]$ is absolutely continuous with respect to the law of the
Brownian motion.  More precisely, for $\epsilon>0$, denote by
$W_\epsilon'$ the Wiener sausage associated to the Brownian loop.
Denote by $\pp$ or $\E$ the law of the Brownian motion and by $\pp'$ the law of
the Brownian loop. Now if $p_t(x,y)$ is the heat kernel, then one has
$$\pp'[z\in W'_\epsilon]\le 2 \E\left[\frac{p_{1/2}(0,\beta_{1/2})}{p_1(0,0)} 1_{\{z\in W_\epsilon\}}\right],$$
for all $\epsilon>0$. Since $p_{1/2}(0,x)$ is bounded, \eqref{sausage}
implies that
$$\pp'[z\in W'_\epsilon]\le \phi(z)/|\ln \epsilon|$$
for some $\phi\in L^1$ as desired.  Thus there is a $c>0$ such that
$\delta_\text{avg}(n)\ge c n\ln\ln n$ for sufficiently large $n$.  We
suspect that neither of these lower bounds is sharp and conjecture
that in fact $\delta_\text{avg}(n)$ and $\delta_\text{rnd}(n)$ grow
like $n\ln n$.  The integral $\int_{\R^2} |i_\gamma(x)|\;dx$ is the
area necessary to fill $\gamma$ with a chain or an arbitrary manifold
with boundary, and requiring that the filling be a disc should
increase the necessary area.

Extending Theorem \ref{thm:maintheo} to more general situations seems
difficult, because of the use of $\excu$, but it can be extended to
walks with certain symmetries.  For instance, $\excu$ can be defined
for the simple random walk on the triangular lattice.  If $z$ is a
point in the plane and $X_t$ is the rescaled triangular random walk,
let $s$ be an edge of the triangle containing $z$, let $\Delta^+$ and
$\Delta^-$ be opposite sides of the straight line containing $s$ and
define $e_i$ and $\excu$ as before.  If $e\in \excu$, define $w(e)$ to
be $0$ if $X_t$ traverses $s$ during $e$.  If this does not happen,
then $X_t$ goes around $s$ in either the positive or negative
direction; let $w(e)=\pm 1/2$ depending on the direction.  Note that
each direction is equally likely, just as in the case of the square
lattice.

Then
$$\biggl|j_n(z)-\sum_{e\in \excu} w(e)\biggr|\le 2+\frac{g_n(s)}{2}$$
where $g_n(s)$ is the number of times that $X_t$ traverses $s$.  We
can bound $\sum_{e\in \excu} w(e)$ exactly as before, by breaking
$\excu$ into small, medium, and large excursions, so it remains only
to bound $g_n(s)$.  This is straightforward; since the random walk is
$n$ steps long, $\sum_s g_n(s)=n$, and there is a $\phi\in L^1$ such
that $\E[g_n(s)]\le \phi(z)/n$.  A similar proof holds for the simple
random walk on the honeycomb lattice.

\bibliographystyle{amsalpha}
\bibliography{windingZ2}

\end{document}